\numberwithin{equation}{section}
\newtheorem{theorem}{Theorem}[section]
\newtheorem{proposition}[theorem]{Proposition}
\newtheorem{corollary}[theorem]{Corollary}
\newtheorem{conjecture}[theorem]{Conjecture}
\newtheorem{lemma}[theorem]{Lemma}
\theoremstyle{definition}
\newtheorem{example}[theorem]{Example}
\newtheorem{remark}[theorem]{Remark}
\DeclareMathOperator\lk{\mathrm{lk}}
\newcommand{\cupdot}{\mathbin{\mathaccent\cdot\cup}}
\newcommand{\field}{{\bf k}}
\newcommand{\Sp}{\mathbb{S}}
\title{The flag upper bound theorem for 3- and 5-manifolds}
\author{Hailun Zheng\\
	\small Department of Mathematics \\[-0.8ex]
	\small University of Washington\\[-0.8ex]
	\small Seattle, WA 98195-4350, USA\\[-0.8ex]
	\small \texttt{hailunz@math.washington.edu}
}
\begin{document}
	\maketitle
	\begin{abstract}
		We prove that among all flag 3-manifolds on $n$ vertices, the join of two circles with $\left\lceil{\frac{n}{2}}\right \rceil$ and $\left\lfloor{\frac{n}{2}}\right \rfloor$ vertices respectively is the unique maximizer of the face numbers. This solves the first case of a conjecture due to Lutz and Nevo. Further, we establish a sharp upper bound on the number of edges of flag 5-manifolds and characterize the cases of equality. We also show that the inequality part of the flag upper bound conjecture continues to hold for all flag 3-dimensional Eulerian complexes and find all maximizers of the face numbers in this class.
	\end{abstract}
	\section{Introduction}
	One of the classical problems in geometric combinatorics deals with the following question: for a given class of simplicial complexes, find tight upper bounds on the number of $i$-dimensional faces as a function of the number of vertices and the dimension. Since Motzkin \cite{Mo} proposed the upper bound conjecture (UBC, for short) for polytopes in 1957, this problem has been solved for various families of complexes. In particular, McMullen \cite{M} and Stanley \cite{St} proved that neighborly polytopes simultaneously maximize all face numbers in the class of polytopes and simplicial spheres. However, it turns out that, apart from cyclic polytopes, many other classes of neighborly spheres or even neighborly polytopes exist, see \cite{Sh} and \cite{P} for examples and constructions of neighborly polytopes.
	
	A simplicial complex $\Delta$ is \emph{flag} if all of its minimal non-faces have cardinality two, or equivalently, $\Delta$ is the clique complex of its graph. Flag complexes form a beautiful and important class of simplicial complexes. For example, barycentric subdivisions of simplicial complexes, order complexes of posets, and Coxeter complexes are flag complexes. Despite a lot of effort that went into studying the face numbers of flag spheres, in particular in relation with the Charney-Davis conjecture \cite{CD}, and its generalization given by Gal's conjecture \cite{G}, a flag upper bound theorem for spheres is still unknown. The upper bounds on face numbers for general simplicial $(d-1)$-spheres are far from being sharp for flag $(d-1)$-spheres, since the graph of any flag $(d-1)$-dimensional complex is $K_{d+1}$-free. In \cite{G}, Gal confirmed that the real rootedness conjecture \cite{DDJO} does hold for flag homology spheres of dimension less than five, and thus the upper bounds on the face numbers of these flag spheres were established. However, starting from dimension five, there are only conjectural upper bounds. For $m\geq 1$, we let $J_m(n)$ be the $(2m-1)$-sphere on $n$ vertices obtained as the join of $m$ copies of the circle, each one a cycle with either $\lfloor \frac{n}{m} \rfloor$ or $\lceil \frac{n}{m}\rceil$ vertices. The following conjecture is due to Nevo and Petersen \cite[Conjecture 6.3]{NP}. 
	\begin{conjecture}\label{conj: Nevo-Petersen}
		If $\Delta$ is a flag homology sphere, then $\gamma(\Delta)$ satisfies the Frankl-F{\"u}redi-Kalai inequalities. In particular, if $\Delta$ is of dimension $d=2m-1$, then $f_i(\Delta)\leq f_i(J_m(n))$ for all $1\leq i\leq 2m-1$.
	\end{conjecture}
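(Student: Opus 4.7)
The plan is to reduce the $f$-vector bound to the $\gamma$-vector statement and then attack the latter. To see why the reduction works, I would first compute $\gamma(J_m(n),t)$ explicitly. The $\gamma$-polynomial is multiplicative under joins, and for a cycle $C_n$ one has $\gamma(C_n,t)=1+(n-4)t$, so
\[
\gamma(J_m(n),t) \;=\; \prod_{j=1}^{m}\bigl(1+(n_j-4)t\bigr),\qquad n_j\in\{\lfloor n/m\rfloor,\lceil n/m\rceil\}.
\]
This polynomial is precisely the $f$-polynomial of the complete $m$-partite flag complex with part sizes $n_j-4$. Among flag complexes with fixed $\gamma_1=n-4m$ it achieves the Frankl--F\"uredi--Kalai maximum coordinate-wise, because balancing the part sizes maximizes every elementary symmetric function (the Zykov/Turán-type rearrangement argument that underlies FFK). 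Hence, once one knows FFK holds for $\gamma(\Delta)$, the bound $\gamma_i(\Delta)\leq\gamma_i(J_m(n))$ follows termwise, and then the standard transitions $\gamma\mapsto h\mapsto f$ (linear with nonnegative coefficients in each $\gamma_j$) transfer the bound to $f_i(\Delta)\leq f_i(J_m(n))$.

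The core task is therefore to prove that $\gamma(\Delta)$ is the $f$-vector of some flag complex. My proposed strategy has three complementary prongs. The first is an induction on $m$: for each vertex $v\in\Delta$, the link $\lk_\Delta v$ is a flag $(2m-2)$-sphere, and since stars of vertices cover $\Delta$, one would try to synthesize an FFK-witnessing flag complex for $\gamma(\Delta)$ from witnesses for the various $\gamma(\lk_\Delta v)$, using Stanley's local $h$-vector decomposition or an edge-subdivision identity in the style of Nevo--Petersen to glue them. The second prong is a direct construction: produce an explicit flag complex $\Gamma(\Delta)$ with $f(\Gamma(\Delta))=\gamma(\Delta)$, where the natural candidate comes from the flag hypergraph/bipartition framework that Nevo and Petersen already use to verify their conjecture in several special cases, guided by the $cd$-index or by a combinatorially-shifted presentation of the missing face complex of $\Delta$. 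The third prong is analytic: invoke Br\"and\'en--Huh Lorentzian polynomial machinery to show that an appropriate transform of $h(\Delta,t)$ is Lorentzian, so that log-concavity of its coefficients, combined with flagness, forces the Newton-type inequalities that FFK demands.

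The main obstacle, and the reason the conjecture has remained open, is that the much weaker statement $\gamma(\Delta)\geq 0$ (Gal's conjecture) is itself unknown starting in dimension $d=5$, so the induction in the first prong has no base case once $m\geq 3$, and the Lorentzian approach at present produces only log-concavity rather than the full FFK envelope. Realistic progress will likely require either a breakthrough on Gal's conjecture -- for instance via a ``flag $h$-positivity'' refinement analogous to the work of Athanasiadis and Karu on barycentric subdivisions -- or a restriction to structured subclasses (order complexes of Cohen--Macaulay posets, Coxeter complexes, or clique complexes coming from Koszul algebras) where Hodge-theoretic or rigidity-theoretic input is available. I would therefore begin by implementing the three-pronged strategy on these subclasses, both to accumulate evidence and to road-test the synthesis step before attempting the general flag homology sphere; the output of this paper, which resolves the cases $d=3$ and the edge bound for $d=5$, would serve as the first two benchmarks any such general framework must reproduce.
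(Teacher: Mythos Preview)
The statement you are addressing is labeled \emph{Conjecture} in the paper and is not proved there in any generality; the paper establishes only the special cases $m=2$ (Theorem~\ref{theorem: 3-manifold}, all $f_i$) and the edge bound $f_1$ for $m=3$ (Theorem~\ref{thm: 5-manifold}). Your proposal is accordingly not a proof but a research outline, and you yourself identify the fatal obstruction: each of your three prongs presupposes, or is at least as hard as, Gal's nonnegativity conjecture $\gamma(\Delta)\geq 0$, which is open for $d\geq 5$. The inductive prong has no base case; the explicit-witness prong is exactly the Nevo--Petersen program that remains incomplete; and Lorentzian machinery currently yields log-concavity, not the full FFK envelope. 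So while your reduction from FFK to the $f$-vector bound via $\gamma(J_m(n),t)=\prod_j(1+(n_j-4)t)$ is correct and standard, nothing after it constitutes an argument.

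It is worth contrasting this with how the paper actually handles the cases it does settle, because the method is entirely orthogonal to the $\gamma$-vector route. The paper never touches $\gamma$: for $m=2$ it picks a vertex $v$ of maximum degree, splits $V(\Delta)$ into $W_1=V(\lk_\Delta v)$ and $W_2=V(\Delta)\setminus W_1$, and bounds $f_1(\Delta)\leq f_1(\lk_\Delta v)+a(n-a)-(n-a-2)$ using that $\lk_\Delta v$ is a $2$-sphere together with the connectivity of $\Delta[W_2\setminus\{v\}]$ (via Lemma~\ref{lemma: normal pseudomanifold}); optimizing in $a$ gives the bound, and a hands-on analysis of equality forces $\Delta=J_2(n)$. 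The $5$-manifold edge bound then bootstraps from this and from Gal's verified $4$-sphere case. This is elementary link-by-link counting with no appeal to FFK, Hodge theory, or Lorentzian polynomials, which is precisely why it succeeds unconditionally in low dimensions while your top-down program is blocked by Gal's conjecture. If you want a proof rather than a plan, the paper's method is the one to emulate; your framework, as you concede, would need a breakthrough elsewhere first.
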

	\noindent Here we denote by $f_i(\Delta)$ the number of $i$-dimensional faces of $\Delta$; the entries of the vector $\gamma(\Delta)$ are certain linear combinations of the $f$-numbers of $\Delta$. For Frankl-F{\"u}redi-Kalai inequalities, see \cite{FFK}.
	
	As for the case of equality, Lutz and Nevo \cite[Conjecture 6.3]{LN} posited that, as opposed to the case of all simplicial spheres, for a fixed dimension $2m-1$ and the number of vertices $n$, there is \emph{only} one maximizer of the face numbers.
	\begin{conjecture}\label{conj: Lutz-Nevo}
		Let $m\geq 2$ and let $\Delta$ be a flag simplicial $(2m-1)$-sphere on $n$ vertices. Then $f_i(\Delta)=f_i(J_m(n))$ for some $1\leq i\leq m$ if and only if $\Delta=J_m(n)$.
	\end{conjecture}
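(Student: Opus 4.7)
The plan is to argue by induction on $m$, with the base case $m=2$ supplied by the main theorem of the present paper. For the inductive step, fix $m\geq 3$, assume the statement for all flag spheres of dimension $\leq 2m-3$, and let $\Delta$ be a flag $(2m-1)$-sphere on $n$ vertices with $f_i(\Delta)=f_i(J_m(n))$ for some $1\leq i\leq m$.

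The first step is to upgrade equality at one index to equality of the full $\gamma$-vector. Assuming the inequality half of Conjecture~\ref{conj: Nevo-Petersen}, the vector $\gamma(\Delta)$ is dominated coordinatewise by $\gamma(J_m(n))$ on the cone carved out by the Frankl--F\"uredi--Kalai inequalities. A convex-geometric argument in that cone should verify that $J_m(n)$ is the unique point maximizing the linear functional $f_i$ (a positive combination of the $\gamma_j$'s) for any $1\leq i\leq m$; hence $\gamma(\Delta)=\gamma(J_m(n))$ and in particular the full $f$-vectors coincide, so the edge count and every vertex-degree sum are already maximal.

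The second step is a link analysis. For every vertex $v$, $\lk_\Delta v$ is a flag $(2m-2)$-sphere, and the identity $(i+1)f_i(\Delta)=\sum_v f_{i-1}(\lk_\Delta v)$ combined with a sharp upper bound on $f_{i-1}$ of flag $(2m-2)$-spheres forces each link to be extremal. This requires developing in parallel an even-dimensional analogue of the conjecture: the unique maximizer of $f_{i-1}$ among flag $(2m-2)$-spheres on $n-1$ vertices should be a suspension $\{u,u'\}*J_{m-1}(\cdot)$ of a flag $(2m-3)$-sphere of the expected type. Applying the inductive hypothesis to each link then identifies $\lk_\Delta v$ with such a suspension, yielding a partition $V(\Delta)=V_1\sqcupdot\cdots\sqcupdot V_m$ (reading off the suspension axis in each link) together with an induced cycle structure on each $V_k$ recorded by the remaining join factors.

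The main obstacle is the final rigidity step: promoting this local information to a global join decomposition $\Delta=C_{n_1}*\cdots*C_{n_m}$. Consistency across vertices is not automatic, since two vertices of the same $V_k$ see each other only through shared cycle neighbors, and ``twisted'' gluings between the locally recognized suspensions have to be ruled out globally. I would attempt to close this by proving that every non-edge of $\Delta$ lies within a single $V_k$---using flagness to exclude minimal non-faces of size $\geq 3$ together with the link partitions to locate non-edges---and then patching the local cycle structures within each $V_k$ into a single $C_{n_k}$ while verifying that the bipartite graph between any two distinct parts is complete. I expect this last gluing, together with the coherence check between parts, to demand a genuinely new ingredient beyond the $m=2$ argument: plausible candidates are a rigidity lemma for joins of flag spheres, a Lefschetz-type inequality on the Stanley--Reisner ring of $\Delta$, or a refined combinatorial analysis of the non-edge graph, which is already delicate at $m=3$ as reflected by the partial results on $5$-manifolds obtained in the present paper.
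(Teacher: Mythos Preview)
The statement you are attempting is Conjecture~\ref{conj: Lutz-Nevo}, which the paper does \emph{not} prove in general: it is posed as an open problem, and the paper establishes only the case $m=2$ (Theorem~\ref{theorem: 3-manifold}) and, for $m=3$, only the edge count $f_1$ (Theorem~\ref{thm: 5-manifold}). So there is no full proof in the paper to compare against for $m\geq 3$.

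Your proposal is a research outline rather than a proof, and it has substantial gaps beyond the one you flag. The most serious is that your first step explicitly assumes the inequality half of Conjecture~\ref{conj: Nevo-Petersen}, which is itself open for $m\geq 3$; without it you cannot even conclude $f_i(\Delta)\leq f_i(J_m(n))$, so there is nothing to ``upgrade.'' Even granting that assumption, the asserted convex-geometric uniqueness---that equality in a single $f_i$ forces $\gamma(\Delta)=\gamma(J_m(n))$ on the Frankl--F\"uredi--Kalai cone---is not justified: $f_i$ is a linear functional on that cone and need not have a unique maximizer among admissible $\gamma$-vectors with $\gamma_1=n-2m$ fixed. Your second step requires an even-dimensional uniqueness theorem for vertex links; this is exactly the conjecture posed at the end of the paper involving the family $\mathcal{J}_m^*(n)$, and note that even conjecturally the maximizer there is not unique, which undermines the ``each link is extremal, hence of the expected type'' step. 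Finally, you yourself identify the global rigidity step as missing a genuine ingredient.

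For comparison, the paper's argument in the cases it does settle is more direct and does not pass through the $\gamma$-vector at all. For $m=2$ it fixes a vertex $v$ of maximum degree, bounds $f_1(\Delta)$ via the decomposition $V=W_1\cupdot W_2$ with $W_1=V(\lk_\Delta v)$ and an explicit edge count (Lemma~\ref{lemma: flag normal 3-pseudomanifolds}), and in the equality case shows by hand that $\Delta[W_2\setminus\{v\}]$ is a path and that $\Delta$ is the join of two cycles. For higher $m$ (Proposition~\ref{prop}) the same max-degree scheme is run inductively, but it is conditional on Conjecture~\ref{conj: even dim} and only controls $f_1$---which already indicates that the full conjecture for $m\geq 3$ lies beyond present methods.
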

	Recently, Adamaszek and Hladk{\'y} \cite{AH} proved that Conjecture \ref{conj: Lutz-Nevo} holds asymptotically for flag homology manifolds. Several celebrated theorems from extremal graph theory served as tools for their work. As a result, the proof simutaneously gives upper bounds on $f$-numbers, $h$-numbers, $g$-numbers and $\gamma$-numbers, but it only applies to flag homology manifolds with an extremely large number of vertices.
	
	Our first main result is that Conjecture \ref{conj: Nevo-Petersen} and \ref{conj: Lutz-Nevo} hold for \emph{all} flag 3-manifolds. In particular, we show that the balanced join of two circles is the unique maximizer of face numbers in the class of flag 3-manifolds. We also establish an analogous result on the number of edges of flag 5-manifolds. The proof only relies on simple properties of flag complexes and Eulerian complexes. 
	
	In 1964, Klee \cite{K} proved that Motzkin's UBC for polytopes holds for a much larger class of Eulerian complexes as long as they have sufficiently many vertices, and conjectured that the UBC holds for \emph{all} Eulerian complexes. Our second main result deals with flag Eulerian complexes, and asserts that Conjecture \ref{conj: Nevo-Petersen} continues to hold for all flag 3-dimensional Eulerian complexes. This provides supporting evidence to a question of Adamaszek and Hladk{\' y} \cite[Problem 17(i)]{AH} in the case of dimension 3, where they proposed that Conjecture \ref{conj: Nevo-Petersen} holds for all odd-dimensional flag weak pseudomanifolds with sufficiently many vertices. We also give constructions of the maximizers of face numbers in this class and show that they are the only maximizers. Our proof is based on an application of the inclusion-exclusion principle and double counting.
	
	The paper is organized as follows. In Section 2, we discuss basic facts on simplicial complexes and flag complexes. In Section 3, we provide the proof of our first main result asserting that given a number of vertices $n$, the maximum face numbers of a flag 3-manifold are achieved only when this manifold is the join of two circles of length as close as possible to $\frac{n}{2}$. In Section 4, we apply an analogous argument to the class of flag 5-manifolds. In Section 5, we show that the same upper bounds continue to hold for the class of flag 3-dimensional Eulerian complexes, and discuss the maximizers of the face numbers in this class. Finally, we close in Section 6 with some concluding remarks. 
	
	\section{Preliminaries}
	A \emph{simplicial complex} $\Delta$ on a vertex set $V=V(\Delta)$ is a collection of subsets
	$\sigma\subseteq V$, called faces, that is closed under inclusion. For $\sigma\in \Delta$, let $\dim\sigma:=|\sigma|-1$ and define the \emph{dimension} of $\Delta$, $\dim \Delta$, as the maximal dimension of its faces. A \emph{facet} in $\Delta$ is a maximal face under inclusion, and we say that $\Delta$ is \emph{pure} if all of its facets have the same dimension. We will denote by $\sqcup$ the disjoint union of simplicial complexes, and by $\cupdot$ the disjoint union of sets.
	
	If $\Delta$ is a simplicial complex and $\sigma$ is a face of $\Delta$, the \emph{link} of $\sigma$ in $\Delta$ is $\lk_\Delta \sigma:=\{\tau-\sigma\in \Delta: \sigma\subseteq \tau\in \Delta\}$, and the \emph{deletion} of a vertex set $W$ from $\Delta$ is $\Delta\backslash W:=\{\sigma\in\Delta:\sigma\cap W=\emptyset\}$. The \emph{restriction} of $\Delta$ to a vertex set $W$ is defined as $\Delta[W]:=\{\sigma\in \Delta:\sigma\subseteq W\}$. If $\Delta$ and $\Gamma$ are two simplicial complexes on disjoint vertex sets, then the \emph{join} of $\Delta$ and $\Gamma$, denoted as $\Delta *\Gamma$, is the simplicial complex on vertex set $V(\Delta)\cupdot V(\Gamma)$ whose faces are $\{\sigma\cup\tau:\sigma\in\Delta, \tau\in\Gamma\}$.
	
	A simplicial complex $\Delta$ is a \emph{simplicial manifold} (resp. \emph{simplicial sphere}) if the geometric realization of $\Delta$ is homeomorphic to a manifold (resp. sphere). We denote by $\tilde{H}_*(\Delta;\field)$ the reduced homology of $\Delta$ computed with coefficients in a field $\field$, and by $\beta_i(\Delta;\field):=\dim_{\field}\tilde{H}_i(\Delta;\field)$ the reduced Betti numbers of $\Delta$ with coefficients in $\field$. We say that $\Delta$ is a $(d-1)$-dimensional \emph{$\field$-homology manifold} if $\tilde{H}_*(\lk_\Delta \sigma;\field)\cong \tilde{H}_*(\mathbb{S}^{d-1-|\sigma|};\field)$ for every nonempty face $\sigma\in\Delta$. A $\field$-\emph{homology sphere} is a $\field$-homology manifold that has the $\field$-homology of a sphere. Every simplicial manifold (resp. simplicial sphere) is a homology manifold (resp. homology sphere). Moreover, in dimension two, the class of homology 2-spheres coincides with that of simplicial 2-spheres, and hence in dimension three, the class of homology 3-manifolds coincides with that of simplicial manifolds.
	
	For a $(d-1)$-dimensional complex $\Delta$, we let $\chi(\Delta):=\sum_{i=0}^{d-1}(-1)^i \beta_i(\Delta;\field)$ be the \emph{reduced Euler characteristic} of $\Delta$. A simplicial complex $\Delta$ is called an \emph{Eulerian} complex if $\Delta$ is pure and $\chi(\lk_\Delta \sigma)=(-1)^{\dim \lk_\Delta \sigma}$ for every $\sigma\in\Delta$, including $\sigma=\emptyset$. In particular, it follows from the Poincar$\mathrm{\acute{e}}$ duality theorem that all odd-dimensional orientable homology manifolds are Eulerian. As all simplicial manifolds are orientable over $\mathbb{Z}/2\mathbb{Z}$, all odd-dimensional simplicial manifolds are Eulerian.
	
	A $(d-1)$-dimensional simplicial complex $\Delta$ is called a \emph{weak $(d-1)$-pseudomanifold} if it is pure and every $(d-2)$-face (called \emph{ridge}) of $\Delta$ is contained in exactly two facets. A weak $(d-1)$-pseudomanifold $\Delta$ is called a \emph{normal $(d-1)$-pseudomanifold} if it is connected, and the link of each face of dimension $\leq d-3$ is also connected. Every Eulerian complex is a weak pseudomanifold, and every connected homology manifold is a normal pseudomanifold. In fact, every normal 2-pseudomanifold is also a homology 2-manifold. However, for $d>3$, the class of normal $(d-1)$-pseudomanifolds is much larger than the class of homology $(d-1)$-manifolds. It is well-known that if $\Delta$ is a weak (resp. normal) $(d-1)$-pseudomanifold and $\sigma$ is a face of $\Delta$ of dimension at most $d-2$, then the link of $\sigma$ is also a weak (resp. normal) pseudomanifold. The following lemma gives another property of normal pseudomanifolds, see \cite[Lemma 1.1]{BD}.
	
	\begin{lemma}\label{lemma: normal pseudomanifold}
		Let $\Delta$ be a normal $(d-1)$-pseudomanifold, and let $W$ be a subset of vertices of $\Delta$ such that the induced subcomplex $\Delta[W]$ is a normal $(d-2)$-pseudomanifold. Then the induced subcomplex of $\Delta$ on vertex set $V(\Delta)\backslash W$ has at most two connected components.
	\end{lemma}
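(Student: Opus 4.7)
My plan is a coloring argument on the facets of $\Delta$. Since $\dim \Delta[W] = d-2 < d-1$, no facet of $\Delta$ lies entirely in $W$, so for each facet $F$ the intersection $F \cap (V\setminus W)$ is a nonempty face of $\Delta[V\setminus W]$. Assign $F$ the color $c(F)$ equal to the connected component of $\Delta[V\setminus W]$ containing this intersection. Every component of $\Delta[V\setminus W]$ arises as some $c(F)$, since any vertex in it lies in some facet of $\Delta$, so it suffices to show that at most two colors occur.

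The first step is that $c(F_1) = c(F_2)$ whenever $F_1, F_2$ share a ridge $R$ not contained in $W$, because they then share a vertex in $V\setminus W$. Color can only change across ridges $R \subseteq W$, equivalently across facets of $\Delta[W]$, which I will call \emph{bad ridges}. Since the dual graph of $\Delta$ is connected by a standard consequence of normality, every color arises as one of the two components hit by some bad ridge.

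The main step is to show that all bad ridges hit the same unordered pair $\{C_\alpha, C_\beta\}$ of components. For this, let $\rho$ be any ridge of $\Delta[W]$, that is, a $(d-3)$-face contained in $W$. The link $\lk_\Delta \rho$ is a normal $1$-pseudomanifold, hence a cycle, and $\lk_{\Delta[W]}\rho = (\lk_\Delta \rho)[W]$ is a normal $0$-pseudomanifold $\{a,b\}$, so $a$ and $b$ are the only vertices of that cycle that lie in $W$. These two vertices must be non-adjacent in the cycle, since otherwise $\rho\cup\{a,b\}$ would be a facet of $\Delta$ contained in $W$, contradicting $\dim\Delta[W]=d-2$. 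Removing $a$ and $b$ therefore splits the cycle into two arcs whose vertices lie in $V\setminus W$; let $C_\alpha, C_\beta$ be the components of $\Delta[V\setminus W]$ containing these arcs. The two facets of $\Delta$ containing the bad ridge $\rho\cup\{a\}$ come from the two cycle-edges incident to $a$, one in each arc, so $\rho\cup\{a\}$ hits exactly the pair $\{C_\alpha, C_\beta\}$; the same holds for $\rho\cup\{b\}$. Since the dual graph of $\Delta[W]$ is connected, again by normality, propagating along a chain of shared ridges of $\Delta[W]$ shows that every bad ridge of $\Delta$ hits the pair $\{C_\alpha, C_\beta\}$, which completes the argument.

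The main obstacle, I expect, is the cycle analysis in the third paragraph, where both normality hypotheses and the dimension condition $\dim\Delta[W]=d-2$ must be combined to force the non-adjacency of $a$ and $b$ and to identify the two arcs with the two components $C_\alpha,C_\beta$; the surrounding steps are routine propagation along the dual graphs of $\Delta$ and $\Delta[W]$.
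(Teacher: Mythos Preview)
Your argument is correct. The paper does not actually prove this lemma; it merely quotes it from the literature as \cite[Lemma~1.1]{BD} (Bagchi--Datta), so there is no ``paper's own proof'' to compare against. Your facet-coloring approach together with the local cycle analysis at each ridge $\rho$ of $\Delta[W]$ gives a clean self-contained proof.

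Two small points deserve to be made explicit when you write it up. First, you use twice that the dual (facet--ridge) graph of a normal pseudomanifold is connected, once for $\Delta$ and once for $\Delta[W]$; this is standard, but it is the only place where normality of both complexes enters, so it is worth stating. Second, the sentence ``every color arises as one of the two components hit by some bad ridge'' tacitly assumes at least two colors are present (so that color must change somewhere along a path in the connected dual graph of $\Delta$); the one-color case is of course trivial. Once you fix a single bad ridge $R_0$ with pair $\{C_\alpha,C_\beta\}$ (such $R_0$ exists because $\Delta[W]$ is nonempty), connectivity of the dual graph of $\Delta$ and your observation that color is constant across non-bad ridges and lies in $\{C_\alpha,C_\beta\}$ at every bad ridge immediately force every facet color into $\{C_\alpha,C_\beta\}$. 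The cycle analysis in your third paragraph is exactly right: the key inputs are that $\lk_\Delta\rho$ is a cycle (normality of $\Delta$), that $(\lk_\Delta\rho)[W]=\{a,b\}$ has exactly two points (weak pseudomanifold property of $\Delta[W]$), and that $a,b$ are non-adjacent (since $\dim\Delta[W]=d-2$ forbids a facet of $\Delta$ inside $W$).
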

	
	For a $(d-1)$-dimensional complex $\Delta$, we let $f_i = f_i(\Delta)$ be the number of $i$-dimensional faces of $\Delta$ for $-1\leq i\leq d-1$. The vector $(f_{-1}, f_0, \cdots, f_{d-1})$ is called the $f$\emph{-vector} of $\Delta$. Since the graph of any simplicial 2-sphere is a maximal planar graph, it follows that the $f$-vector  of a simplicial 2-sphere is uniquely determined by $f_0$. For a 3-dimensional Eulerian complex, the following lemma indicates that its $f$-vector is uniquely determined by $f_0$ and $f_1$.
	\begin{lemma}\label{f number:3-manifold}
		The $f$-vector of a 3-dimensional Eulerian complex satisfies  \[(f_0,f_1,f_2,f_3)=(f_0,f_1,2f_1-2f_0, f_1-f_0).\]
	\end{lemma}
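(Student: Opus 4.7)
The plan is to extract two linear relations among $f_0, f_1, f_2, f_3$ from the Eulerian condition and then solve the resulting $2\times 2$ linear system for $f_2$ and $f_3$ in terms of $f_0$ and $f_1$.

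First I would apply the Eulerian condition at $\sigma = \emptyset$. Since $\dim \Delta = 3$ and $\lk_\Delta \emptyset = \Delta$, this gives $\chi(\Delta) = -1$. Combining with the Euler--Poincaré identity $\chi(\Delta) = -1 + f_0 - f_1 + f_2 - f_3$ yields the first relation
\begin{equation*}
f_0 - f_1 + f_2 - f_3 = 0.
\end{equation*}

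Next I would apply the Eulerian condition at each 2-face $\sigma$. Its link is $0$-dimensional, so $\chi(\lk_\Delta \sigma) = 1$, which forces $\lk_\Delta \sigma$ to consist of exactly two vertices; equivalently, each ridge is contained in exactly two facets (this is the weak pseudomanifold property mentioned in the preliminaries). Counting facet--ridge incidences, with each tetrahedral facet contributing four 2-faces, gives $4 f_3 = 2 f_2$, that is, $f_2 = 2 f_3$.

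Substituting $f_2 = 2 f_3$ into the Euler relation yields $f_3 = f_1 - f_0$, and hence $f_2 = 2 f_1 - 2 f_0$, as desired. There is no real obstacle here; the only subtlety is verifying that the Eulerian hypothesis on $2$-dimensional links does produce the ridge--facet count, but this is immediate from the fact that the only $0$-dimensional complex with reduced Euler characteristic $1$ is a pair of points.
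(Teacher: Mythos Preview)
Your proof is correct and follows essentially the same approach as the paper's: both use the Euler relation $f_0-f_1+f_2-f_3=0$ coming from $\chi(\Delta)=-1$ together with the double count $4f_3=2f_2$ arising from the fact that every ridge lies in exactly two facets, and then solve for $f_2$ and $f_3$. You are simply a bit more explicit than the paper in deriving the ridge condition from the Eulerian hypothesis on $2$-faces.
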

	\begin{proof}
		Let $\Delta$ be a 3-dimensional Eulerian complex. Since $\Delta$ is Eulerian, $\chi(\Delta)+1=f_0-f_1+f_2-f_3=0$. Also since every ridge of an Eulerian complex is contained in exactly two facets, by double counting, we obtain that $2f_2=4f_3$. Hence the result follows. 
	\end{proof}
	
	A simplicial complex $\Delta$ is \emph{flag} if all minimal non-faces of $\Delta$, also called missing faces, have cardinality two; equivalently, $\Delta$ is the clique complex of its graph. The following lemma \cite[Lemma 5.2]{NP} gives a basic property of flag complexes.
	\begin{lemma}\label{flag prop}
		Let $\Delta$ be a flag complex on vertex set $V$. If $W\subseteq V$, then $\Delta[W]$ is also flag. Furthermore, if $\sigma$ is a face in $\Delta$, then $\lk_\Delta \sigma=\Delta[V(\lk_\Delta \sigma)]$. In particular, all links in a flag complex are also flag.
	\end{lemma}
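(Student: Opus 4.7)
The plan is to handle the three assertions in order, leaning on the definition that a complex is flag precisely when every missing face has cardinality two (equivalently, every set of vertices all of whose pairs are edges is itself a face).

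First I would prove that $\Delta[W]$ is flag by taking an arbitrary minimal non-face $\tau$ of $\Delta[W]$. Since $\tau \subseteq W$ but $\tau \notin \Delta$, the set $\tau$ contains some minimal non-face $\{x,y\}$ of $\Delta$, which has cardinality two since $\Delta$ itself is flag. Because $\{x,y\} \subseteq \tau \subseteq W$ and $\{x,y\} \notin \Delta$, the pair $\{x,y\}$ is already a non-face of $\Delta[W]$, so minimality of $\tau$ forces $\tau = \{x,y\}$. This is a short argument and I expect no real difficulty here.

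Next I would establish the identity $\lk_\Delta \sigma = \Delta[V(\lk_\Delta \sigma)]$ by a two-way inclusion. The inclusion $\subseteq$ is immediate: any $\tau \in \lk_\Delta \sigma$ satisfies $\tau \in \Delta$ and every vertex $v \in \tau$ obviously lies in $V(\lk_\Delta \sigma)$ since $\sigma \cup \{v\} \subseteq \sigma \cup \tau \in \Delta$. The reverse inclusion is the main content and is where the flag hypothesis is actually used. Given $\tau \in \Delta[V(\lk_\Delta \sigma)]$, I would show $\sigma \cup \tau \in \Delta$ by checking that every pair of vertices in $\sigma \cup \tau$ spans an edge of $\Delta$: pairs inside $\sigma$ are edges because $\sigma \in \Delta$, pairs inside $\tau$ are edges because $\tau \in \Delta$, and a mixed pair $\{x,y\}$ with $x \in \sigma$, $y \in \tau$ is an edge because $\sigma \cup \{y\} \in \Delta$ by the defining property of $V(\lk_\Delta \sigma)$. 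Flagness then upgrades this clique to a face, and disjointness of $\tau$ from $\sigma$ is automatic since $V(\lk_\Delta \sigma) \cap \sigma = \emptyset$.

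Finally, the ``in particular'' clause is a one-line consequence: $\lk_\Delta \sigma$ equals an induced subcomplex of $\Delta$ by the second assertion, and induced subcomplexes of flag complexes are flag by the first assertion. The only nontrivial step is the mixed-pair case in the $\supseteq$ direction above, and even there the argument is purely combinatorial — the flag hypothesis does all the work, so I do not anticipate any genuine obstacle.
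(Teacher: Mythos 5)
Your proof is correct. The paper itself gives no argument for this lemma (it simply cites Lemma 5.2 of Nevo--Petersen), and your write-up is the standard proof one would expect there: the reduction of minimal non-faces of $\Delta[W]$ to minimal non-faces of $\Delta$, the two-way inclusion for $\lk_\Delta \sigma=\Delta[V(\lk_\Delta \sigma)]$ with the mixed-pair clique check and the observation that $V(\lk_\Delta\sigma)\cap\sigma=\emptyset$, and the one-line deduction that links are flag all go through as written.
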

	
	Finally, we recall some terminology from graph theory. A graph $G$ is a path graph if the set of its vertices can be ordered as $x_1,x_2,\cdots,x_n$ in such a way that $\{x_i,x_{i+1}\}$ is an edge for all $1\leq i\leq n-1$ and there are no other edges. Similarly, a cycle graph is a graph obtained from a path graph by adding an edge between the endpoints of the path.
	
	\section{The Proof of the Lutz-Nevo Conjecture for flag 3-manifolds}
	
	The goal of this section is to prove Conjecture \ref{conj: Lutz-Nevo} for 3-dimensional manifolds. We start by setting up some notation and establishing several lemmas that will be used in the proof. Recall that in the Introduction, we defined $J_m(n)$ to be the $(2m-1)$-sphere on $n$ vertices obtained as the join of $m$ circles, each one of length either $\lfloor \frac{n}{m} \rfloor$ or $\lceil \frac{n}{m}\rceil$. In the following, we define $J_m^*(n)$ as the suspension of $J_m(n-2)$, and denote by $\mathcal{C}_d^*$ the $(d-1)$-dimensional octahedral sphere, i.e., the boundary complex of the $d$-dimensional cross-polytope. Equivalently, $C_d^*$ is a $d$-fold join of $\Sp^0$, and thus $\mathcal{C}_{2m}^*=J_m(4m)$ and $\mathcal{C}_{2m+1}^*=J_m^*(4m+2)$. The following lemma \cite[Lemma 5.3]{NP}, originally stated for the class of flag homology spheres, gives a sufficient condition for a flag normal pseudomanifold to be an octahedral sphere. (As the proof is identical to that of \cite[Lemma 5.3]{NP}, we omit it.)
	\begin{lemma}\label{lemma: octahedral sphere}
		Let $\Delta$ be a $(d-1)$-dimensional flag normal pseudomanifold on vertex set $V$ such that for any $v\in V$, $\lk_\Delta v$ is an octahedral sphere. Then $\Delta$ is an octahedral sphere.
	\end{lemma}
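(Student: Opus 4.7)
The plan is to pick a vertex $v \in V$, set $L := \lk_\Delta v$ and $W := V(L)$, and produce a unique vertex $v^* \in V \setminus (W \cup \{v\})$ that plays the role of the antipode of $v$ in $\mathcal{C}_d^*$. Once this is done, $\Delta$ has the same graph as $\{v, v^*\} * L \cong \mathcal{C}_d^*$, and flagness on both sides finishes the argument. By hypothesis $L \cong \mathcal{C}_{d-1}^*$ has $2(d-1)$ vertices; Lemma~\ref{flag prop} gives $\Delta[W] = L$.

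For each $u \in W$, the hypothesis says $\lk_\Delta u$ is octahedral with $2(d-1)$ vertices: one of them is $v$; another $2(d-2)$ are the neighbors of $u$ in $L$ (using $\Delta[W] = L$ together with $\lk_L u \cong \mathcal{C}_{d-2}^*$); hence exactly one further vertex $\phi(u)$ lives in $V \setminus (W \cup \{v\})$. A one-line flagness check identifies $\phi(u)$ as the unique non-neighbor of $v$ in the octahedral sphere $\lk_\Delta u$. To see that $\phi$ is constant on $W$, take $u_1, u_2$ adjacent in $L$; the join decomposition $\lk_\Delta u_1 = \{v, \phi(u_1)\} * \mathcal{C}_{d-2}^*$ makes $\phi(u_1)$ adjacent to every neighbor of $v$ in $\lk_\Delta u_1$, and in particular to $u_2$. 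So $\phi(u_1)$ lies among the neighbors of $u_2$ outside $W \cup \{v\}$, and uniqueness forces $\phi(u_1) = \phi(u_2)$. Since the graph of $\mathcal{C}_{d-1}^*$ is connected in the relevant range $d \geq 3$, $\phi$ is constant on $W$; write $v^*$ for its common value.

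To finish, I apply Lemma~\ref{lemma: normal pseudomanifold} with vertex set $W$: $\Delta[W] = L$ is a normal $(d-2)$-pseudomanifold, so $\Delta[V \setminus W]$ has at most two connected components. The vertices $v$ and $v^*$ are each isolated in $\Delta[V \setminus W]$, because all neighbors of $v$ lie in $W$, and $\phi \equiv v^*$ together with $|V(\lk_\Delta v^*)| = 2(d-1) = |W|$ forces the neighbors of $v^*$ to be exactly $W$. Any further vertex of $V$ would yield a third component, contradicting Lemma~\ref{lemma: normal pseudomanifold}, so $V = W \cup \{v, v^*\}$ with $\{v, v^*\} \notin \Delta$. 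The graph of $\Delta$ now coincides with that of $\{v, v^*\} * L \cong \Sp^0 * \mathcal{C}_{d-1}^* = \mathcal{C}_d^*$, and since both complexes are flag, $\Delta = \mathcal{C}_d^*$. The chief difficulty is proving constancy of $\phi$, which relies on the join structure of the octahedral link and the connectedness of $L$ as a graph; the remaining steps are bookkeeping on vertex counts and an appeal to Lemma~\ref{lemma: normal pseudomanifold}.
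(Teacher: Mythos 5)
Your proof is correct, and there is nothing in the paper to compare it against line by line: the paper deliberately omits the proof of this lemma, asserting it is identical to that of \cite[Lemma 5.3]{NP}; your antipode argument supplies exactly such a proof, and it genuinely works at the stated generality of flag normal pseudomanifolds. The key steps all check out: the count $f_0(\lk_\Delta u)=2(d-1)$ together with $\Delta[W]=L$ (Lemma \ref{flag prop}) isolates a unique neighbor $\phi(u)$ of $u$ outside $W\cup\{v\}$; flagness gives $\{v,\phi(u)\}\notin\Delta$, so $\phi(u)$ is the antipode of $v$ in the octahedral sphere $\lk_\Delta u$; the resulting join decomposition propagates $\phi$ along edges of $L$, and connectedness of the graph of $\mathcal{C}_{d-1}^*$ makes $\phi$ constant. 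Two minor remarks. First, Lemma \ref{lemma: normal pseudomanifold} is not needed at the end: the same vertex counts show every $u\in W$ has all its neighbors in $W\cup\{v,v^*\}$, while the neighbor sets of $v$ and $v^*$ are exactly $W$; so $W\cup\{v,v^*\}$ is closed under adjacency, and connectedness of $\Delta$ (part of the definition of a normal pseudomanifold) already forces $V=W\cup\{v,v^*\}$. Second, your restriction to $d\geq 3$ is forced by the statement, not by your method: for $d=2$ every flag cycle has all vertex links equal to $\Sp^0=\mathcal{C}_1^*$ yet only the $4$-cycle is octahedral, so the lemma must be read with $d\geq 3$, which is the only case in which the paper invokes it (in the induction proving Lemma \ref{lemma: few vertices}).
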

	Next we characterize all flag normal $(d-1)$-pseudomanifolds on $\leq 2d+1$ vertices.
	\begin{lemma}\label{lemma: few vertices}
		Let $\Delta$ ba a flag normal $(d-1)$-pseudomanifold, where $d\geq 2$. Then 
		\begin{enumerate}[(a)]
			\item $f_0(\Delta)\geq 2d$. Equality holds if and only if $\Delta=\mathcal{C}_d^*$.
			\item $f_0(\Delta)=2d+1$ if and only if $\Delta=J_1(5)*\mathcal{C}_{d-2}^*$, or equivalently,  $\Delta=J_m(4m+1)$ if $d=2m$, and $\Delta=J_m^*(4m+3)$ if $d=2m+1$.
		\end{enumerate}
	\end{lemma}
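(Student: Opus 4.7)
The plan is to prove (a) and (b) simultaneously by induction on $d$. For the base case $d=2$, a flag normal $1$-pseudomanifold is a connected triangle-free $2$-regular graph, hence a cycle of length at least $4$: the $4$-cycle $\mathcal{C}_2^*$ is the unique minimizer, and the pentagon $J_1(5)=J_1(5)*\mathcal{C}_0^*$ is the only example on $5$ vertices. For the inductive step, pick any vertex $v$; by Lemma~\ref{flag prop} the link $\lk_\Delta v$ is a flag normal $(d-2)$-pseudomanifold, so the induction hypothesis for (a) gives $f_0(\lk_\Delta v)\geq 2d-2$. Moreover, $v$ must have a non-neighbor, for otherwise $\Delta=v*\lk_\Delta v$ is a cone and any facet of $\lk_\Delta v$ would be a ridge of $\Delta$ lying in only one facet, contradicting the pseudomanifold property. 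This yields $f_0(\Delta)\geq 2d$, and equality forces $\lk_\Delta v=\mathcal{C}_{d-1}^*$ for every $v$, whereupon Lemma~\ref{lemma: octahedral sphere} gives $\Delta=\mathcal{C}_d^*$.

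For (b), assume $f_0(\Delta)=2d+1$; then every vertex has either $1$ or $2$ non-neighbors. A parity argument produces some $v$ with exactly two non-neighbors $u_1,u_2$: otherwise the complement graph of $\Delta$ would be a perfect matching on $2d+1$ vertices, which is impossible. The inductive hypothesis gives $\lk_\Delta v=\mathcal{C}_{d-1}^*$, and Lemma~\ref{lemma: normal pseudomanifold} applied with $W=V(\lk_\Delta v)$ forces $u_1u_2\in\Delta$, since otherwise $\Delta[\{v,u_1,u_2\}]$ would have three connected components. Setting $A_j=\{w\in\lk_\Delta v:wu_j\notin\Delta\}$, the lower bound $f_0(\lk_\Delta u_j)\geq 2d-2$ yields $|A_j|\leq 1$; and since each $w\in\lk_\Delta v$ already has its $\mathcal{C}_{d-1}^*$-antipode $w'$ as non-neighbor and at most two non-neighbors in total, the sets $A_1$ and $A_2$ are disjoint.

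The main obstacle is pinning down $|A_1|=|A_2|=1$ and the correct antipodal identification. I would rule out $|A_1|=0$ by observing that $\lk_\Delta u_1=\Delta[\{u_2\}\cup V(\lk_\Delta v)]$ would then have $2d-1$ vertices but only $(d-1)+|A_2|\leq d$ non-edges, contradicting the induction hypothesis for (b), which forces $\lk_\Delta u_1=J_1(5)*\mathcal{C}_{d-3}^*$ with $d+2$ non-edges (or $J_1(5)$ with $5$ non-edges when $d=3$). So $A_j=\{x_j\}$ for each $j$, and $\lk_\Delta u_1=\mathcal{C}_{d-1}^*$ by the induction hypothesis for (a). Inside this octahedral sphere the antipode $x_1'$ of $x_1$ in $\lk_\Delta v$ has no non-neighbor remaining in $V(\lk_\Delta v)\setminus\{x_1\}$, so its unique non-neighbor in $\lk_\Delta u_1$ must be $u_2$, giving $x_1'=x_2$.

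The structure then assembles cleanly: $\{v,x_2,u_1,u_2,x_1\}$ induces the $5$-cycle $J_1(5)$, the set $Y:=V(\lk_\Delta v)\setminus\{x_1,x_2\}$ induces $\mathcal{C}_{d-2}^*$ (being $\mathcal{C}_{d-1}^*$ with one antipodal pair removed), and each $w\in Y$ is adjacent to all five cycle vertices because its sole non-neighbor is its antipode $w'\in Y$. A non-edge count shows that the $1$-skeletons of $\Delta$ and $J_1(5)*\mathcal{C}_{d-2}^*$ coincide, so by flagness $\Delta=J_1(5)*\mathcal{C}_{d-2}^*$; the equivalent forms $J_m(4m+1)$ (for $d=2m$) and $J_m^*(4m+3)$ (for $d=2m+1$) follow at once.
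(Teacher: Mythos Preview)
Your proof is correct but proceeds along a genuinely different route from the paper's. For part (b) the paper locates a vertex $u$ with a \emph{large} link: since not every vertex link can be octahedral (else Lemma~\ref{lemma: octahedral sphere} would force $\Delta=\mathcal{C}_d^*$), some $u$ has $f_0(\lk_\Delta u)\geq 2d-1$; then for any facet $\sigma$ of $\lk_\Delta u$, flagness forces both vertices of $\lk_\Delta\sigma$ to lie outside $V(\lk_\Delta u)$, which yields $f_0(\lk_\Delta u)=2d-1$ and shows that these two outside vertices are the same for every such $\sigma$. Hence $\Delta$ is the suspension of $\lk_\Delta u$, and the inductive hypothesis for (b) applied to $\lk_\Delta u$ finishes the proof in one line. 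You instead locate a vertex $v$ with a \emph{small} (octahedral) link via the parity obstruction in the complement graph, then use Lemma~\ref{lemma: normal pseudomanifold} together with a non-edge count to pin down the adjacencies of the two non-neighbors $u_1,u_2$ and reconstruct the $5$-cycle and the $\mathcal{C}_{d-2}^*$ factor by hand.

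The two approaches trade off cleanly: the paper never invokes Lemma~\ref{lemma: normal pseudomanifold} and uses the inductive hypothesis for (b) constructively, giving a shorter argument; your proof uses that hypothesis only to derive a contradiction (ruling out $|A_1|=0$) and instead builds the join structure explicitly, which makes the $J_1(5)$ factor visible from the outset rather than emerging after unwinding a suspension.
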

	\begin{proof}
		Part (a) is well-known, see the proof of Proposition 2.2 in \cite{At}. For part (b), we use induction on the dimension. 
		If $d=2$ and $f_0(\Delta)=5$, then $\Delta$ is a circle of length 5, that is, $J_1(5)$. Now assume that the claim holds for $d<k$, and consider a flag normal $(k-1)$-pseudomanifold $\Delta$ on $2k+1$ vertices. Since $\lk_\Delta v$ is a flag normal pseudomanifold for any vertex $v\in \Delta$, by part (a), $f_0(\lk_\Delta v)\geq 2k-2$. Moreover, if equality holds, then $\lk_\Delta v$ is an octahedral sphere. However, if $f_0(\lk_\Delta v)=2k-2$ for every vertex $v\in\Delta$, then by Lemma \ref{lemma: octahedral sphere}, $\Delta$ must be the octahedral sphere of dimension $k-1$, contradicting that $f_0(\Delta)=2k+1$. Hence there is at least one vertex $u\in \Delta$ such that $f_0(\lk_\Delta u)\geq 2k-1$. Since $\Delta$ is a normal pseudomanifold, for any facet $\sigma\in\lk_\Delta u$, $\lk_\Delta \sigma$ consists of two vertices. On the other hand, since $\Delta$ is flag, $\lk_\Delta u$ is also flag by Lemma \ref{flag prop}, and hence $\lk_\Delta \sigma$ does not contain any vertex in $V(\lk_\Delta u)$. It follows that $f_0(\lk_\Delta u)\leq (2k+1)-2=2k-1$. Therefore, $f_0(\lk_\Delta u)=2k-1$, and the links of all facets of $\lk_\Delta u$ are the same two vertices. This implies that $\Delta$ is the suspension of $\lk_\Delta u$. By induction, $\lk_\Delta u=J_1(5)*\mathcal{C}_{d-3}^*$, and hence $\Delta=J_1(5)*\mathcal{C}_{d-2}^*$.
	\end{proof}
	
	Now we estimate the number of edges in a flag normal 3-pseudomanifold on $n$ vertices.
	\begin{lemma}\label{lemma: flag normal 3-pseudomanifolds}
		Let $\Delta$ be a flag normal 3-pseudomanifold on $n$ vertices. Then $f_1(\Delta)\leq f_1(J_2(n))+c$, where $c=3-3\min_{v\in \Delta} \chi(\lk_\Delta v)$.
	\end{lemma}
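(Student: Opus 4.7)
The plan is to use a structural analysis centered on a carefully chosen vertex, combined with the fundamental identity relating $f_2$ to $f_1$ through link Euler characteristics.

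First, I would derive the key identity
$$f_2(\Delta) = 2 f_1(\Delta) - n - \sum_{v \in V} \chi(\lk_\Delta v).$$
This follows because every link $\lk_\Delta v$ is a flag normal 2-pseudomanifold, hence a flag 2-manifold. Any flag closed surface $\Sigma$ on $k$ vertices with reduced Euler characteristic $\chi$ satisfies $f_2(\Sigma) = 2f_1(\Sigma)/3$ (ridges in two facets) and $\chi = k - f_1(\Sigma) + f_2(\Sigma) - 1$, which together yield $f_1(\Sigma) = 3(k - 1 - \chi)$. Applied to $\lk_\Delta v$, this gives $f_1(\lk_\Delta v) = 3(d_v - 1 - \chi_v)$ where $d_v := \deg_\Delta(v)$ and $\chi_v := \chi(\lk_\Delta v)$. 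Summing over $v$ and using the double count $\sum_v f_1(\lk_\Delta v) = 3 f_2(\Delta)$ yields the identity.

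Second, I would select a vertex $v_0$ realizing $\chi_{v_0} = \chi_{\min}$ and set $d := d_{v_0}$. By Lemma~\ref{lemma: few vertices}(a) applied to $\lk_\Delta v_0$, one has $d \geq 6$. I would then partition the edges of $\Delta$ according to their position relative to $v_0$:
$$f_1(\Delta) = d + 3(d - 1 - \chi_{\min}) + e(N(v_0), W) + f_1(\Delta[W]),$$
where $W := V \setminus N[v_0]$ has cardinality $n - 1 - d$. By Lemma~\ref{lemma: normal pseudomanifold} applied to the vertex set $N(v_0)$ (whose induced subcomplex $\lk_\Delta v_0$ is a normal 2-pseudomanifold), the subcomplex $\Delta[W]$ is connected, since the isolated vertex $v_0$ already accounts for the second allowed component.

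Third, I would bound the contribution $e(N(v_0), W) + f_1(\Delta[W])$. The trivial estimate $e(N(v_0), W) + f_1(\Delta[W]) \leq d(n - 1 - d) + \binom{n-1-d}{2}$ is too weak. The refinement uses that for every $w \in W$, $\lk_\Delta w$ is a flag 2-manifold on $d_w \geq 6$ vertices with $\chi(\lk_\Delta w) \geq \chi_{\min}$, constraining both $d_w$ and the number of edges between $N(v_0)$ and $W$. Optimizing the resulting expression over $d \in \{6, \ldots, n-2\}$ should yield a maximum precisely matching $f_1(J_2(n)) + 3 - 3\chi_{\min}$, attained (when $\chi_{\min} = 1$) at the degree $d = \lceil n/2 \rceil + 2$ realized in $J_2(n)$, and (when $\chi_{\min} < 1$) at the extremal configurations such as the suspension of a flag closed surface of Euler characteristic $\chi_{\min}$.

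The main obstacle is the tightness of the bound on the ``outer'' edges. The degree constraint $d_w \geq 6$ and the flag 2-manifold structure of $\lk_\Delta w$ must be combined in a way that recovers exactly the quadratic main term $\lfloor n/2 \rfloor \lceil n/2 \rceil$, with the additive slack $3 - 3\chi_{\min}$ emerging from the single term involving $\chi_{\min}$ at the distinguished vertex $v_0$. Verifying the equality case at suspensions of flag surfaces (e.g., the suspension of a 7-vertex flag torus gives equality for $n = 9$ with $\chi_{\min} = -1$ and $c = 6$) serves as a sanity check that the bound is both correct and sharp.
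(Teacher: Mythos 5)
Your skeleton (a distinguished vertex, the relation $f_1(\lk_\Delta v)=3(d_v-1-\chi_v)$ for the surface links, and the connectivity consequence of Lemma~\ref{lemma: normal pseudomanifold}) is the right one, but the step that actually produces the bound is missing, and your two main choices make it unobtainable. First, you take $v_0$ to be a vertex minimizing $\chi(\lk_\Delta v)$; the working choice is a vertex $v$ of \emph{maximum degree}. That choice is the engine of the whole estimate: it gives $f_0(\lk_\Delta w)\leq a:=f_0(\lk_\Delta v)$ for every $w$ outside the closed star, and writing the outer edge count as $\sum_{w\in W_2}f_0(\lk_\Delta w)=e(W_1,W_2)+2f_1(\Delta[W_2])$ (with $W_1=V(\lk_\Delta v)$, $W_2=V\setminus W_1$) one gets $e(W_1,W_2)+f_1(\Delta[W_2])\leq (n-a)a-f_1(\Delta[W_2])\leq (n-a)a-(n-a-2)$, where the last step uses the connectivity of $\Delta[W_2\setminus\{v\}]$ to bound the \emph{subtracted} term from below. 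Then $f_1(\Delta)\leq \bigl(3a-3-3\chi(\lk_\Delta v)\bigr)+(n-a)a-(n-a-2)$ is a concave quadratic in $a$ whose maximum over the integers is $\lfloor n^2/4\rfloor+n+3-3\chi(\lk_\Delta v)\leq f_1(J_2(n))+c$; note that the $\chi$-term need not come from the minimizing vertex, since $\chi(\lk_\Delta v)\geq \min_u\chi(\lk_\Delta u)$. In your formulation the outer contribution $e(N(v_0),W)+f_1(\Delta[W])$ enters with a plus sign, so the connectivity of $\Delta[W]$ (a lower bound on $f_1(\Delta[W])$) cannot be exploited, and the constraints you propose to use in your third step, $d_w\geq 6$ and $\chi(\lk_\Delta w)\geq\chi_{\min}$, are lower bounds that cannot yield the needed upper bound: with $v_0$ of possibly small degree there is no control of the degrees of vertices of $W$ in terms of $d$ (an induced-subcomplex Tur\'an-type bound is far too weak), and ``optimizing over $d$ should yield a maximum precisely matching'' is a hope, not an argument. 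The identity $f_2=2f_1-n-\sum_v\chi(\lk_\Delta v)$ you derive is correct but plays no role in closing this gap.

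A secondary point: your sanity check is not valid. The $7$-vertex torus is $2$-neighborly, hence not flag (its clique complex is a simplex), so there is no ``$7$-vertex flag torus''; moreover a flag closed surface with $\chi<1$ has at least $8$ vertices (Lemma~\ref{lemma: few vertices} forces spheres on $6$ or $7$ vertices), and for a flag surface $T$ on $k\geq 8$ vertices one computes $f_1(\Sigma T)=5k-3-3\chi(T)<\lfloor (k+2)^2/4\rfloor+(k+2)+3-3\chi(T)$, so suspensions of flag surfaces do not attain the bound. Since the lemma asserts only an inequality, tightness for $\chi_{\min}<1$ is not needed, but this example should not be cited as evidence.
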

	\begin{proof}
		Let $v$ be a vertex of maximum degree in $V(\Delta)$. We let $a=f_0(\lk_\Delta v)$, $W_1=V(\lk_\Delta v)$ and $W_2=V(\Delta)\backslash V(\lk_\Delta v)$. Since $\Delta$ is a normal 3-pseudomanifold, $\lk_\Delta v$ is a normal 2-pseudomanifold, i.e., a simplicial 2-manifold. Furthermore, since $\Delta$ is flag, by Lemma \ref{flag prop}, $\lk_\Delta v$ is the restriction of $\Delta$ to $W_1$. Thus, by Lemma \ref{lemma: normal pseudomanifold}, the induced subcomplex $\Delta[W_2]$ has at most two connected components. Since $v$ is not connected to any vertices in $W_2\backslash\{v\}$, it follows that $\{v\}$ and $\Delta[W_2\backslash\{v\}]$ are the two connected components in $\Delta[W_2]$.
		
		We now count the edges of $\Delta$. They consist of the edges of $\Delta[W_1]=\lk_\Delta v$, the edges of $\Delta[W_2]$ and the edges between these two sets. In addition, $\sum_{w\in W_2} f_0(\lk_\Delta w)$ counts the edges of $\Delta[W_2]$ twice. Thus,
		\begin{align}\label{formular}
		\begin{split}
	    f_1(\Delta)&=f_1(\Delta[W_1])+\Big(\sum_{w\in W_2} f_0(\lk_\Delta w)\Big)-f_1(\Delta[W_2])\\
		&\stackrel{(*)}{\leq} f_1(\lk_\Delta v)+|W_2|\cdot\max_{w\in W_2} f_0(\lk_\Delta w) -(f_0(\Delta[W_2\backslash\{v\}])-1)\\
		&\stackrel{(**)}{=} \big(3a-6+3(1-\chi(\lk_\Delta v))\big)+(n-a)a-(n-a-2)\\
		&=-a^2+a(n+4)-(n+4)+3-3\chi(\lk_\Delta v)\\
		&\stackrel{(***)}{\leq} \left \lfloor{\frac{n^2}{4}}\right \rfloor+n+3-3\chi(\lk_\Delta v)\\
		&=f_1(J_2(\Delta))+3(1-\chi(\lk_\Delta v)).
		\end{split}
		\end{align}
		Here in (*) we used that $\Delta[W_2\backslash\{v\}]$ is connected and hence has at least $f_0(\Delta[W_2\backslash\{v\}])-1$ edges. Equality (**) follows from the fact that $\lk_\Delta v$ is a 2-manifold with $a$ vertices, and (***) is obtained by optimizing the function $p(a)=-a^2+a(n+4)$. Hence the result follows.
	\end{proof}
    \begin{theorem}\label{theorem: 3-manifold}
    	Let $\Delta$ be a flag 3-manifold on $n$ vertices. Then $f_i(\Delta)\leq f_i(J_2(n))$. If equality holds for some $1\leq i\leq 3$, then $\Delta=J_2(n)$.
    \end{theorem}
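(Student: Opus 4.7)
The plan splits into establishing the inequality and then the equality case.

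For the inequality, $\Delta$ is an odd-dimensional simplicial manifold, hence Eulerian, so Lemma~\ref{f number:3-manifold} reduces every $f_i$-bound to $f_1(\Delta)\le f_1(J_2(n))$ once we observe $f_0(\Delta)=n=f_0(J_2(n))$. Every vertex link in a simplicial 3-manifold is a 2-sphere with reduced Euler characteristic $1$, so the constant $c$ in Lemma~\ref{lemma: flag normal 3-pseudomanifolds} equals $3-3\cdot 1=0$, and $f_1(\Delta)\le f_1(J_2(n))$ follows. The bounds on $f_2$ and $f_3$ then come from Lemma~\ref{f number:3-manifold}.

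For the equality case, suppose $f_i(\Delta)=f_i(J_2(n))$ for some $1\le i\le 3$; the linear relations in Lemma~\ref{f number:3-manifold} force $f_1(\Delta)=f_1(J_2(n))$, so every inequality in the proof of Lemma~\ref{lemma: flag normal 3-pseudomanifolds} must be tight. Picking a vertex $v$ of maximum degree $a$ and setting $W_1=V(\lk_\Delta v)$, $W_2=V(\Delta)\setminus W_1$, I would extract the following from the tight estimates: $a=\lceil(n+4)/2\rceil$ (when $n$ is odd, the alternative $a=(n+3)/2$ is ruled out by a direct double-count showing the average degree in $W_1$ would then exceed $a$, contradicting maximality); every $w\in W_2$ has $\deg w=a$; $T:=\Delta[W_2\setminus\{v\}]$ is a tree on $n-a-1$ vertices; and $\lk_\Delta v$ is a flag 2-sphere with $a$ vertices. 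A second double-count of the total degree shows that $\Delta$ is $a$-regular when $n$ is even, and that for $n$ odd exactly two vertices of $W_1$ have degree $a$ while the remaining $(n+1)/2$ have degree $a-1$.

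It remains to identify $\Delta$ with $J_2(n)$. In the base cases $n=8,9$, the link $\lk_\Delta v$ is a flag 2-sphere on $6$ or $7$ vertices, so by Lemma~\ref{lemma: few vertices} it equals $\mathcal{C}_3^*$ or $J_1(5)*\mathcal{C}_1^*$; combined with the (near-)regularity, all vertex links are determined and $\Delta=J_2(n)$ follows either from Lemma~\ref{lemma: octahedral sphere} (for $n=8$) or by directly recognizing $\Delta$ as the suspension of $\lk_\Delta v$ (for $n=9$). For $n\ge 10$ the main obstacle is to show that $T$ is a path and that $\lk_\Delta v$ is the suspension of an $(a-2)$-cycle. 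Once this is established, the two poles $v_1,v_2$ of $\lk_\Delta v$ together with $v$ and $T$ assemble into a Hamiltonian cycle $C_1$ on $W_2\cup\{v_1,v_2\}$, the equator of $\lk_\Delta v$ provides the second cycle $C_2$, and the regularity plus the join condition force $\Delta=C_1*C_2=J_2(n)$. My approach to the suspension claim is to use that each leaf $w$ of $T$ has exactly one non-neighbor $z_w\in W_1$, so $\lk_\Delta w$ is a flag 2-sphere whose induced edges on $W_1\setminus\{z_w\}$ coincide with those of $(\lk_\Delta v)\setminus z_w$; comparing edge counts then pins down $\deg_{\lk_\Delta v}(z_w)$, and iterating along the leaves and internal vertices of $T$ recovers the full suspension structure of $\lk_\Delta v$.
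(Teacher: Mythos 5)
Your inequality part is exactly the paper's argument (vertex links of a flag 3-manifold are 2-spheres, so the constant in Lemma~\ref{lemma: flag normal 3-pseudomanifolds} vanishes, and Lemma~\ref{f number:3-manifold} transfers the bound from $f_1$ to $f_2,f_3$), and your overall plan for the equality case --- tightness forces $\deg w=a$ for all $w\in W_2$ and $T=\Delta[W_2\setminus\{v\}]$ a tree, base cases $n=8,9$ via Lemma~\ref{lemma: few vertices}, then show $T$ is a path and $\lk_\Delta v$ is a suspension of a cycle and assemble $\Delta=C_1*C_2$ --- is also the paper's plan. The problem is that the two facts you yourself identify as ``the main obstacle'' are precisely where the paper does all of its work, and your sketch for them does not go through. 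Comparing edge counts of the flag $2$-sphere $\lk_\Delta u_1$ (for a leaf $u_1$ of $T$ with non-neighbor $z_1\in W_1$ and tree-neighbor $u_2$) with those of $\lk_\Delta v$ only yields the relation $\deg_{\lk_\Delta u_1}(u_2)=\deg_{\lk_\Delta v}(z_1)=|C_1|$, where $C_1=\lk_{\lk_\Delta v}z_1$; it does not pin that common value down to $a-2$, which is the actual content of the suspension claim. Nothing in your outline rules out $z_1$ having small degree in $\lk_\Delta v$, or $T$ having a branch vertex, and ``iterating along the leaves and internal vertices of $T$'' is an assertion of the conclusion rather than an argument.

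What the paper does at this point is genuinely topological: from flagness one gets $\lk_\Delta u_1=\bigl((\lk_\Delta v)-\{z_1\}*C_1\bigr)\cup\bigl(\{u_2\}*C_1\bigr)$, and then one analyzes the $2$-sphere $\lk_\Delta u_2$: either $z_1\in\lk_\Delta u_2$, which forces $\lk_\Delta u_2=C_1*\{u_1,z_1\}$, $|C_1|=a-2$ and $W_2=\{u_1,u_2\}$; or $z_1\notin\lk_\Delta u_2$, and then the fact that the circle $C_1$ separates the $2$-sphere $\lk_\Delta u_2$ into two connected components, together with the observation that $u_1$ and any further tree-neighbors $u_3,\dots,u_k$ of $u_2$ lie in the same component (since $\lk_\Delta u_1\supseteq\lk_\Delta v\setminus\{z_1\}$), forces $\deg_{\Delta[W_2]}u_2=2$ and lets the structure propagate inductively down the tree; only at the end of this induction does one conclude that $T$ is a path, $|C_1|=a-2$, and $\lk_\Delta v=C_1*\{z_1,z_2\}$. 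Your proposal contains no substitute for this separation/connectivity argument, so the equality case for $n\ge 10$ is a genuine gap. A minor additional point: your claim that a degree double-count shows that for odd $n$ exactly two vertices of $W_1$ have degree $a$ and the rest degree $a-1$ is not justified --- the count only bounds the total degree deficiency over $W_1$ by $(n+1)/2$, not its distribution --- though this side claim is not needed if the main structural argument is carried out as in the paper.
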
	
	\begin{proof}
		We use the same notation as in the proof of Lemma \ref{lemma: flag normal 3-pseudomanifolds}. That is, we let $v$ be a vertex of maximum degree in $V(\Delta)$. We let $a=f_0(\lk_\Delta v)$, $W_1=V(\lk_\Delta v)$ and $W_2=V(\Delta)\backslash V(\lk_\Delta v)$. Since $\Delta$ is a flag 3-manifold, $\chi(\lk_\Delta w)=1$ for every $w\in \Delta$. Hence by Lemma \ref{lemma: flag normal 3-pseudomanifolds}, $f_1(\Delta)\leq f_1(J_2(\Delta))$. Furthermore, it follows from steps (*) and (***) in equality (\ref{formular}) that $f_1(\Delta)=f_1(J_2(n))$ holds only if $f_0(\lk_\Delta w)=a=\left\lceil{\frac{n+4}{2}}\right \rceil$ or $\left\lfloor{\frac{n+4}{2}}\right\rfloor$ for all $w\in W_2$, and $\Delta[W_2\backslash\{w\}]$ is a tree.
			
			We claim that if $f_1(\Delta)=f_1(J_2(n))$, then $\Delta=J_2(n)$. This indeed holds if $n=8$ or 9, since by Lemma \ref{lemma: few vertices}, the only flag 3-manifolds on 8 or 9 vertices are $J_2(8)$ and $J_2(9)$. Next we assume that $n\geq 10$, where $|W_2|=n-a\geq \left\lceil{\frac{n}{2}}\right \rceil -2>2$. Hence the tree $\Delta[W_2\backslash\{v\}]$ has at least one edge, and thus there is a vertex $u_1\in W_2$ such that $\deg_{\Delta[W_2]}u_1=1$. Let $u_2$ be the unique vertex in $W_2$ that is connected to $u_1$. Since $f_0(\lk_\Delta u_1)=a$, the vertex $u_1$ must be connected to all vertices in $W_1$ except for one vertex. We let $z_1$ be this vertex and denote the circle $\lk_{\lk_\Delta v}z_1$ by $C_1$. Since $\Delta$ is flag, $\lk_\Delta u_1\supseteq \Delta[W_1\backslash\{z_1\}]=\lk_\Delta v - \{z_1\}* C_1$, and hence \[\lk_\Delta u_1=(\lk_\Delta v-\{z_1\}*C_1) \cup (\{u_2\}*C_1).\]
			
			If $\{z_1\}\in \lk_\Delta u_2$, then $\lk_\Delta u_2\supseteq C_1*\{u_1,z_1\}$. Since $C_1*\{u_1,z_1\}$ is a 2-sphere, it follows that $\lk_\Delta u_2= C_1*\{u_1,z_1\}$ and $f_0(C_1)=a-2$. Hence $W_2=\{u_1,u_2\}$ and $W_1=V(C_1)\cup\{z_1\}\cup\{z_2\}$ for some vertex $z_2\in W_1$, so that $\lk_\Delta v=\{z_1,z_2\}*C_1$. Now assume that $\{z_1\} \notin \lk_\Delta u_2$ and $u_2$ is connected to vertices $u_3,u_4,\cdots,u_k$ in $\Delta[W_2]$. Since $C_1$ is a circle in the 2-sphere $\lk_\Delta u_2$, the subcomplex $\lk_\Delta u_2\backslash V(C_1)$ has two contractible connected components. If there is a vertex $u_i$ such that $\lk_{\lk_\Delta u_2} u_i=C_1$, then $\lk_\Delta u_2\supseteq C_1*\{u_1,u_i\}$ and hence this link is exactly $C_1*\{u_1,u_i\}$. This implies that $\deg_{\Delta[W_2]}u_2=2$. Otherwise, if $\lk_{\lk_\Delta u_2} u_i\neq C_1$ for all $3\leq i\leq k$, then each $u_i$ is connected to at least one vertex in $\lk_\Delta v\backslash (V(C_1)\cup\{z_1\})$. Since $\lk_\Delta u_1\supseteq \lk_\Delta v\backslash \{z_1\}$, it follows that the vertices $u_1$ and $u_3,\cdots,u_k$ are in the same connected component, and hence $\lk_\Delta u_2\backslash V(C_1)$ is connected, a contradiction.
			
			By applying the above argument inductively, we obtain that $\Delta[W_2\backslash\{v\}]$ is a path graph $(u_1,u_2,\cdots, u_{n-a-1})$, and there is a vertex $z_2$ in $W_1$ such that $\lk_\Delta u_1=\{z_2,u_2\}*C_1$ and $\lk_\Delta v=C_1*\{z_1,z_2\}$. Furthermore, $C_1\subseteq \lk_\Delta u_i$ for all $u_i\in W_2$. Then we let $C_2$ be the cycle graph $(v,z_2,u_1,u_2,\cdots,u_{n-a-1},z_1)$. It follows that $\Delta=C_1*C_2$. Since $a=|C_1|+2=\lfloor \frac{n+4}{2} \rfloor$ or $\lceil \frac{n+4}{4} \rceil$, $C_1$ and $C_2$ must be cycles of length $\lfloor \frac{n}{2} \rfloor$ or $\lceil \frac{n}{2} \rceil$. This implies $\Delta=J_2(n)$.
			
			By Lemma \ref{f number:3-manifold}, the $f$-vector of $\Delta$ is uniquely determined by $f_0(\Delta)=n$ and $f_1(\Delta)$. This yields the result.
		\end{proof}
		
		\section{Counting edges of flag 5-manifolds}
		
		For even-dimensional flag simplicial spheres, we have the following weaker form of the flag upper bound conjecture, see Conjecture 18 in \cite{AH}:
		\begin{conjecture}\label{conj: even dim}
			Fix $m\geq 1$. For every flag $2m$-sphere $\Delta$ on $n$ vertices, we have $f_1(M)\leq f_1(J_m^*(n))$.
		\end{conjecture}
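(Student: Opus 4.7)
The plan is to mimic the approach of Lemma \ref{lemma: flag normal 3-pseudomanifolds} and Theorem \ref{theorem: 3-manifold}, replacing the flag 3-manifold link with a flag $(2m-1)$-sphere link, and inducting on $m$. Let $\Delta$ be a flag $2m$-sphere on $n$ vertices and choose a vertex $v$ of maximum degree; set $a = f_0(\lk_\Delta v)$, $W_1 = V(\lk_\Delta v)$ and $W_2 = V(\Delta) \setminus W_1$. Since $\Delta$ is a flag normal $2m$-pseudomanifold and $\lk_\Delta v$ is a flag normal $(2m-1)$-pseudomanifold, Lemma \ref{lemma: normal pseudomanifold} applied to $W = W_1$ gives that $\Delta[W_2]$ has at most two connected components. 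The vertex $v$ is isolated inside $\Delta[W_2]$, since all of its neighbors lie in $W_1$, so the other component must be $\Delta[W_2 \setminus \{v\}]$, which is therefore connected.

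I would then use the edge-counting identity
\[
f_1(\Delta) = f_1(\lk_\Delta v) + \sum_{w \in W_2} f_0(\lk_\Delta w) - f_1(\Delta[W_2])
\]
from the proof of Lemma \ref{lemma: flag normal 3-pseudomanifolds}, and bound the three terms: $f_1(\lk_\Delta v) \leq f_1(J_m(a))$ using Conjecture \ref{conj: Nevo-Petersen} in dimension $2m-1$ as an inductive hypothesis, $f_0(\lk_\Delta w) \leq a$ by the maximum-degree choice of $v$, and $f_1(\Delta[W_2]) \geq n - a - 2$ from the existence of a spanning tree in $\Delta[W_2\setminus\{v\}]$. At the integer value $a = n-2$ (which is the degree of a suspension vertex of $\Sigma J_m(n-2)$), the resulting expression equals $f_1(J_m(n-2)) + 2(n-2) = f_1(J_m^*(n))$, the desired target. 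Given that the function of $a$ is globally bounded above by this value, the inequality follows. For equality, saturating every step would force $\lk_\Delta v = J_m(n-2)$ via the equality case of the inductive hypothesis, each $w \in W_2 \setminus \{v\}$ to be joined to exactly $n-3$ vertices of $\lk_\Delta v$, and $\Delta[W_2 \setminus \{v\}]$ to be a path; an argument in the same spirit as the final part of Theorem \ref{theorem: 3-manifold} should then identify $\Delta$ with $\Sigma J_m(n-2) = J_m^*(n)$.

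The hard part is twofold. First, the induction calls on Conjecture \ref{conj: Nevo-Petersen} in odd dimension $2m-1$ as input, which is itself open beyond $m=2$; so this scheme is self-contained only for flag $4$-spheres, using Theorem \ref{theorem: 3-manifold} as the base case. Second, a direct check shows that the crude bound produced by the identity above is not tight against $f_1(J_m^*(n))$ at every value of $a$: already for flag 4-spheres with $n = 14$, the naive expression $f_1(J_2(a)) + (n-a)a - \max(0, n-a-2)$ attains $73 > 72 = f_1(J_2^*(14))$ at $a = 10$ or $11$. To close this gap one likely needs an additional constraint, for instance the double-counting identity $2f_1(\Delta) = \sum_{w \in V(\Delta)} f_0(\lk_\Delta w)$ coupled with a lower estimate on $\sum_{w \in W_1} f_0(\lk_\Delta w)$, or a sharper upper bound on $f_1(\Delta[W_2])$ that exploits the pseudomanifold structure of $\Delta$ rather than mere connectivity. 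This sharpening is where I expect the real technical work of the proof to lie.
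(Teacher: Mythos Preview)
The statement you are attempting to prove is stated in the paper as a \emph{conjecture}, not a theorem; the paper does not prove it and offers no proof sketch for it. Conjecture~\ref{conj: even dim} is invoked only as a hypothesis in Proposition~\ref{prop}, and the single case the paper uses (flag $4$-spheres, $m=2$) is imported from Gal's work \cite{G} on the real-rootedness conjecture, which goes through the $\gamma$-vector and the Davis--Okun theorem rather than any link-decomposition argument of the type you outline.

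Your own analysis already pinpoints why the proposed scheme does not close: the bound
\[
f_1(\Delta)\leq f_1(J_m(a))+(n-a)a-(n-a-2)
\]
is a concave function of $a$ whose maximum, for even-dimensional targets, genuinely exceeds $f_1(J_m^*(n))$ at interior values of $a$ (your $n=14$, $a=10$ or $11$ computation is correct). In the odd-dimensional case treated in Proposition~\ref{prop} this same template works because the optimum of the analogous quadratic lands exactly on $f_1(J_{m+1}(n))$; in the even-dimensional case the arithmetic simply does not cooperate, and no amount of sharpening the tree bound on $f_1(\Delta[W_2])$ alone will rescue it, since the overshoot already appears when $\Delta[W_2\setminus\{v\}]$ could in principle be a tree. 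So the gap is not a missing lemma but a structural mismatch: the link-and-complement decomposition is calibrated to produce joins of circles, not suspensions of such joins. As the paper's closing remarks indicate, even the $m=2$ case currently rests on the Davis--Okun theorem, and extending beyond that is open.
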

		
		Using almost the same argument as in the proof of Theorem \ref{theorem: 3-manifold}, we establish the following proposition.
		\begin{proposition}\label{prop}
			Let $\Delta$ be a flag $(2m+1)$-manifold on $n$ vertices. If Conjecture \ref{conj: even dim} holds for all flag $2i$-spheres with $1\leq i\leq m$, then $f_1(\Delta)\leq f_1(J_{m+1}(n))$. Equality holds only when $\Delta=J_{m+1}(n)$.
		\end{proposition}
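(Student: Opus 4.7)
The plan is to replicate the strategy used in the proof of Theorem~\ref{theorem: 3-manifold}, with the planar-graph bound $f_1(\lk_\Delta v)\leq 3a-6$ on the $2$-sphere link replaced by the bound $f_1(\lk_\Delta v)\leq f_1(J_m^*(a))$ furnished by the hypothesized Conjecture~\ref{conj: even dim} for the flag $2m$-sphere $\lk_\Delta v$. I fix a vertex $v\in\Delta$ of maximum degree and set $a:=f_0(\lk_\Delta v)$, $W_1:=V(\lk_\Delta v)$, and $W_2:=V(\Delta)\backslash W_1$. By Lemma~\ref{flag prop}, $\lk_\Delta v=\Delta[W_1]$; and Lemma~\ref{lemma: normal pseudomanifold} applied to $W_1$ (a flag normal $2m$-pseudomanifold) forces $\Delta[W_2]$ to have at most two connected components, one of which must be $\{v\}$ itself.

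Reproducing the edge-counting identity \eqref{formular} and inserting the three bounds $f_1(\lk_\Delta v)\leq f_1(J_m^*(a))$ (by hypothesis), $f_0(\lk_\Delta w)\leq a$ for every $w\in W_2$ (by the choice of $v$), and $f_1(\Delta[W_2])\geq n-a-2$ (since $\Delta[W_2\backslash\{v\}]$ is connected), I obtain
\[f_1(\Delta)\leq f_1(J_m^*(a))+(n-a)a-(n-a-2).\]
It then remains to verify the numerical inequality
\[f_1(J_m^*(a))+(n-a)a-(n-a-2)\leq f_1(J_{m+1}(n))\]
over the admissible range $4m+2\leq a\leq n-2$, with equality exactly at $a=a^*:=n+2-\lfloor n/(m+1)\rfloor$, which is the maximum vertex degree realised in $J_{m+1}(n)$. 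I expect this optimization step to be the principal obstacle: since $J_m^*(a)=\Sp^0*J_m(a-2)$ and $f_1(J_m(\cdot))$ is only piecewise-quadratic in its argument (governed by the near-equal partition of $a-2$ into $m$ parts), the verification reduces to a finite case analysis comparing the first differences $f_1(J_m^*(a+1))-f_1(J_m^*(a))$ with the linear change in the quadratic piece $(n-a)a-(n-a-2)$, organised by the division $a-2=qm+r$ with $0\leq r<m$.

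Granting this optimization, equality in the main bound simultaneously forces $a=a^*$, equality in Conjecture~\ref{conj: even dim} for $\lk_\Delta v$ (so that, assuming the equality case of that conjecture, $\lk_\Delta v=J_m^*(a^*)=\{z_1,z_2\}*J_m(a^*-2)$), full degree $f_0(\lk_\Delta w)=a^*$ for every $w\in W_2$, and $\Delta[W_2\backslash\{v\}]$ a tree. I would then conclude $\Delta=J_{m+1}(n)$ by the same leaf-peeling argument as in Theorem~\ref{theorem: 3-manifold}: a leaf $u_1$ of the tree is adjacent to every vertex of $W_1$ except one, and iteratively determining $\lk_\Delta u_i$ along the tree forces $\Delta[W_2\backslash\{v\}]$ to be a path which, together with $v$, $z_1$, and $z_2$, closes up into a single cycle $C$ of length $\lfloor n/(m+1)\rfloor$. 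The remaining $J_m(a^*-2)$ factor of $\lk_\Delta v$ joins with $C$ to exhibit $\Delta=C*J_m(a^*-2)=J_{m+1}(n)$, as required.
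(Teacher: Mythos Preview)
Your overall strategy---pick a maximum-degree vertex $v$, decompose the edge count as in \eqref{formular}, and replace the $2$-sphere bound $3a-6$ by the hypothesised bound $f_1(\lk_\Delta v)\leq f_1(J_m^*(a))$---is exactly the paper's approach. Two points, however, deserve correction.

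First, the optimization you flag as ``the principal obstacle'' is not: the paper records the closed formula
\[
f_1(J_m^*(a))=\tfrac{m-1}{2m}(a-2)^2+3(a-2)+\tfrac{q(q-m)}{2m},\qquad a-2\equiv q\pmod m,
\]
so that after substitution the right-hand side is a genuine quadratic in $a$ with leading coefficient $-\tfrac{m+1}{2m}$, plus a residue term $\tfrac{q(q-m)}{2m}\leq 0$ that is bounded independently of $a$. Completing the square gives the maximum at $a=\tfrac{mn}{m+1}+2$ and the bound $f_1(\Delta)\leq f_1(J_{m+1}(n))$ in one line; no piecewise case analysis is needed.

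Second, and more seriously, your treatment of the equality case has a gap. You write that equality forces $f_1(\lk_\Delta v)=f_1(J_m^*(a^*))$ and then, ``assuming the equality case of that conjecture,'' conclude $\lk_\Delta v=J_m^*(a^*)$. But Conjecture~\ref{conj: even dim} as stated (and as hypothesised in the Proposition) asserts only the inequality $f_1\leq f_1(J_m^*)$; no characterisation of equality is assumed, and indeed the expected equality class is the larger family $\mathcal{J}_m^*(n)$ of Section~6, not $J_m^*(n)$ alone. The paper circumvents this by setting the whole proof up as an \emph{induction on $m$}: the leaf-peeling argument of Theorem~\ref{theorem: 3-manifold} is run exactly as you describe, but its output is only that $\lk_\Delta v$ is the suspension of some flag $(2m-1)$-sphere $\Gamma$ on $a^*-2$ vertices (this is what the argument actually proves---it does not presuppose the suspension structure). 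One then checks from $f_1(\Delta)=f_1(J_{m+1}(n))$ that $f_1(\Gamma)=f_1(J_m(a^*-2))$, and the inductive hypothesis (the Proposition for $m-1$) identifies $\Gamma=J_m(a^*-2)$. Without this inductive frame your argument cannot close: you have no way to pin down $\lk_\Delta v$, and the identification $\Delta=J_{m+1}(n)$ does not follow.
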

		\begin{proof}
			The proof is by induction on $m$. The case $m=1$ is confirmed by Theorem \ref{theorem: 3-manifold}. Now assume that $m\geq 2$ and $\Delta$ is a flag $(2m+1)$-manifold on $n$ vertices. If $n\equiv q \mod{m}$ ($0\leq q < m$), then a simple computation shows that
			\begin{equation}\label{f_1 of J_r(n)}
			f_1(J_{m}(n))=\frac{m-1}{2m}n^2+n+\frac{q(q-m)}{2m},\ f_1(J_m^*(n))=\frac{m-1}{2m}(n-2)^2+3(n-2)+\frac{q(q-m)}{2m}.
			\end{equation}
			
			As in the proof of Lemma \ref{lemma: flag normal 3-pseudomanifolds}, we let $v$ be a vertex of maximum degree in $V(\Delta)$,  $a=f_0(\lk_\Delta v)$, $W_1=V(\lk_\Delta v)$ and $W_2=V(\Delta)\backslash V(\lk_\Delta v)$. Following the same argument as in Lemma \ref{lemma: flag normal 3-pseudomanifolds}, we obtain the following analog of (\ref{formular}):
			\begin{equation*}
			\begin{split}
			f_1(\Delta) &\leq f_1(J_{m}^*(a))+|W_2|\cdot\max_{w\in W_2} f_0(\lk_\Delta w) -(f_0(\Delta[W_2\backslash\{v\}])-1)\\
			&\stackrel{(\Diamond)}{\leq} \frac{m-1}{2m}(a-2)^2+3(a-2)+\frac{q(q-m)}{2i}+(n-a)a-(n-a-2) \\
			&=-\frac{m+1}{2m}\big(a-(\frac{mn}{m+1}+2)\big)^2+\frac{m}{2m+2}n^2+n+\frac{q(q-m)}{2r},
			\end{split}		
			\end{equation*}
			where in ($\Diamond$) we used our assumption that Conjecture \ref{conj: even dim} holds for flag $2m$-spheres.
			By (\ref{f_1 of J_r(n)}), \[f_1(J_{m+1}(n))=\frac{m}{2m+2}n^2+n+\frac{q(q-m-1)}{2r}.\] Hence we conclude that $f_1(\Delta)\leq f_1(J_{m+1}(n))$. Moreover, equality holds when $a=\left\lceil{\frac{mn}{m+1}}\right \rceil+2$ or $a=\left\lfloor{\frac{mn}{m+1}}\right\rfloor+2$, and $\Delta[W_2\backslash\{w\}]$ is a tree. If $n=4m+4$ or $4m+5$, then by Lemma \ref{lemma: few vertices}, $\Delta$ is either $J_{m+1}(4m+4)$ or $J_{m+1}(4m+5)$. If $n>4m+5$, then the tree $\Delta[W_2\backslash\{w\}]$ has at least one edge. We proceed with the same argument as in Theorem \ref{theorem: 3-manifold} to show that $\lk_\Delta v$ must be the suspension of a $(2m-1)$-sphere on either $\left\lceil{\frac{mn}{m+1}}\right \rceil$ or $\left\lfloor{\frac{mn}{m+1}}\right\rfloor$ vertices. By induction, this sphere is the join of $m$ circles, each having length either $\left\lceil{\frac{n}{m+1}}\right \rceil$ or $\left\lfloor{\frac{n}{m+1}}\right\rfloor$. Hence $\Delta=J_{m+1}(n)$.
		\end{proof}
		\begin{theorem}\label{thm: 5-manifold}
			Let $\Delta$ be a flag 5-manifold on $n$ vertices. Then $f_1(\Delta)\leq f_1(J_3(n))$. Equality holds if and only if $\Delta=J_3(n)$.
		\end{theorem}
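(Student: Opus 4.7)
The plan is to deduce Theorem~\ref{thm: 5-manifold} from Proposition~\ref{prop} with $m=2$: that proposition reduces the statement to verifying Conjecture~\ref{conj: even dim} for flag $2i$-spheres with $1\le i\le 2$. For $i=1$ this is automatic, since every simplicial $2$-sphere satisfies $f_1=3f_0-6=f_1(J_1^*(n))$ by the Euler relation together with the two-triangles-per-edge condition. The substance is therefore to prove Conjecture~\ref{conj: even dim} for flag $4$-spheres, i.e.\ to show that every flag $4$-sphere $S$ on $a$ vertices satisfies $f_1(S)\le f_1(J_2^*(a))$, with equality iff $S=J_2^*(a)$.

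I would prove this main lemma by mimicking the strategy of Lemma~\ref{lemma: flag normal 3-pseudomanifolds} and Theorem~\ref{theorem: 3-manifold} one dimension higher. Pick a vertex $v\in V(S)$ of maximum degree and set $b=f_0(\lk_S v)$, $W_1=V(\lk_S v)$, $W_2=V(S)\setminus W_1$. Since $\lk_S v$ is a flag $3$-sphere, Theorem~\ref{theorem: 3-manifold} gives $f_1(\lk_S v)\le f_1(J_2(b))$; since $\lk_S v$ is a normal $3$-pseudomanifold, Lemma~\ref{lemma: normal pseudomanifold} forces $S[W_2]$ to have at most two connected components, namely $\{v\}$ and $S[W_2\setminus\{v\}]$ on $a-b-1$ vertices. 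Combining the edge identity with the max-degree bound $\sum_{w\in W_2}f_0(\lk_S w)\le(a-b)b$ and the tree lower bound $f_1(S[W_2])\ge a-b-2$ yields
\[
f_1(S)\le f_1(J_2(b))+(a-b)b-(a-b-2).
\]

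The main obstacle is that, unlike in the $3$-manifold case, this bound is not tight enough on its own: a direct optimization over $b$ overshoots $f_1(J_2^*(a))$ by $(a-10)^2/12$ at the interior critical point $b^*=2(a+2)/3$. To close the gap I would use two additional pieces of information. First, for a flag $4$-sphere one automatically has $b\le a-2$, because $S$ cannot be a cone. Second, the equality clause of Theorem~\ref{theorem: 3-manifold} pins the link down: $f_1(\lk_S v)=f_1(J_2(b))$ forces $\lk_S v=J_2(b)$. Reprising the reconstruction argument from the proof of Theorem~\ref{theorem: 3-manifold} one dimension higher, I would show that in the near-extremal regime every vertex of $W_2$ is forced to have a prescribed link structure relative to $\lk_S v$, so that $S$ must be a suspension $\Sp^0*T$ of a flag $3$-sphere $T$. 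For such a suspension $f_1(S)=f_1(T)+2(a-2)$, and Theorem~\ref{theorem: 3-manifold} then gives $f_1(S)\le f_1(J_2(a-2))+2(a-2)=f_1(J_2^*(a))$, with equality iff $T=J_2(a-2)$, that is, $S=J_2^*(a)$. Feeding this lemma into Proposition~\ref{prop} with $m=2$ delivers both the inequality and the equality characterization asserted in Theorem~\ref{thm: 5-manifold}.
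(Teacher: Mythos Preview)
Your overall plan---reduce Theorem~\ref{thm: 5-manifold} to Proposition~\ref{prop} with $m=2$, so that the entire burden becomes verifying Conjecture~\ref{conj: even dim} for flag $4$-spheres---is exactly what the paper does. The paper, however, does not attempt to prove the $4$-sphere case by hand: it simply cites Gal's result (\cite{G}, Theorem~3.1.3), which establishes the upper bound for all flag homology spheres of dimension less than five via the real-rootedness conjecture and the Davis--Okun theorem. That citation is the whole proof.

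Your attempt to replace Gal's theorem with a self-contained argument has a genuine gap. You correctly compute that the naive bound $f_1(S)\le f_1(J_2(b))+(a-b)b-(a-b-2)$ overshoots $f_1(J_2^*(a))$ by $(a-10)^2/12$ at the interior maximum $b^*=2(a+2)/3$, and that it only meets $f_1(J_2^*(a))$ at the endpoint $b=a-2$. But nothing forces the maximum degree $b$ of an arbitrary flag $4$-sphere to sit near $a-2$; for any $b$ in the nonempty interval $\bigl((a+14)/3,\,a-2\bigr)$ your bound already exceeds the target. Your proposed fix---invoking the equality clause of Theorem~\ref{theorem: 3-manifold} and ``reprising the reconstruction argument'' to force $S$ to be a suspension ``in the near-extremal regime''---does not close this. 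The reconstruction in Theorem~\ref{theorem: 3-manifold} requires \emph{all} the intermediate inequalities to be exact equalities (every $w\in W_2$ has degree exactly $b$, $S[W_2\setminus\{v\}]$ is a tree, and $f_1(\lk_S v)=f_1(J_2(b))$); merely assuming $f_1(S)>f_1(J_2^*(a))$ leaves up to $(a-10)^2/12$ of slack distributed among these steps, which is far too much to pin down any structure. In short, the even-dimensional step genuinely needs a different tool, and the paper imports Gal's theorem precisely because the link-counting machinery of Section~3 does not carry it.
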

		\begin{proof}
			The result follows from Proposition \ref{prop} and the fact that Conjecture \ref{conj: even dim} is known to hold in the case of dimension four (see \cite{G}, Theorem 3.1.3).
		\end{proof}
		
		\section{The face numbers of flag 3-dimensional Eulerian complexes}
		In Lemma \ref{lemma: flag normal 3-pseudomanifolds}, we established an upper bound on the number of edges for all flag normal 3-pseudomanifolds. In this section, we find sharp upper bounds on the face numbers for all flag 3-dimensional Eulerian complexes. The proof relies on the following three lemmas.
		\begin{lemma}\label{dual edge}
		Let $\Delta$ be a flag $(d-1)$-dimensional simplicial complex.
				\begin{enumerate}[(a)]
					\item If $\sigma_1$ and $\sigma_2$ are two ridges that lie in the same facet $\sigma$ in $\Delta$, then the links of $\sigma_1$ and $\sigma_2$ are disjoint. 
					\item If $\sigma=\tau_1\cupdot\tau_2$ is a face in $\Delta$, then $V(\lk_\Delta \tau_1)\cap V(\lk_\Delta \tau_2)=V(\lk_\Delta \sigma)$. In particular, if $\sigma$ is a facet, then $f_0(\lk_\Delta \tau_1)+f_0(\lk_\Delta \tau_2)\leq f_0(\Delta)$. 
				\end{enumerate}
			\end{lemma}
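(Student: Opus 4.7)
The plan is to exploit the defining property of a flag complex: any clique in the 1-skeleton is already a face. Both parts reduce to the following core observation: if a vertex $w\notin\sigma$ is adjacent to every vertex of a facet $\sigma$, then by flagness $\sigma\cup\{w\}\in\Delta$, contradicting the maximality of $\sigma$. The only care needed is to track that $w$ genuinely avoids $\sigma$, and this will be immediate from unwinding the definition $V(\lk_\Delta\tau)=\{w\notin\tau:\tau\cup\{w\}\in\Delta\}$.

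For part (a), write $\sigma_1=\sigma\setminus\{v_1\}$ and $\sigma_2=\sigma\setminus\{v_2\}$ with $v_1\neq v_2$, so that $\sigma_1\cup\sigma_2=\sigma$. Suppose toward contradiction that some $w$ belongs to both $V(\lk_\Delta\sigma_1)$ and $V(\lk_\Delta\sigma_2)$. Then $w\notin\sigma_1\cup\sigma_2=\sigma$, and $w$ is joined by an edge to every vertex of $\sigma_1$ and of $\sigma_2$, hence to every vertex of $\sigma$. The set $\sigma\cup\{w\}$ is then a clique, and flagness gives $\sigma\cup\{w\}\in\Delta$, contradicting the assumption that $\sigma$ is a facet.

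For part (b), I would prove the two inclusions of $V(\lk_\Delta\tau_1)\cap V(\lk_\Delta\tau_2)=V(\lk_\Delta\sigma)$ separately. The $\supseteq$ direction is immediate: if $w\in V(\lk_\Delta\sigma)$, then $\sigma\cup\{w\}\in\Delta$, so each subset $\tau_i\cup\{w\}$ is a face as well, and since $w\notin\sigma\supseteq\tau_i$, we conclude $w\in V(\lk_\Delta\tau_i)$ for $i=1,2$. For $\subseteq$, a vertex $w$ in the intersection avoids both $\tau_1$ and $\tau_2$, hence avoids $\sigma=\tau_1\cupdot\tau_2$, and is adjacent to every vertex of $\sigma$; flagness then upgrades this to $\sigma\cup\{w\}\in\Delta$, so $w\in V(\lk_\Delta\sigma)$. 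The ``in particular'' statement is a one-line consequence: if $\sigma$ is a facet then $V(\lk_\Delta\sigma)=\emptyset$, so $V(\lk_\Delta\tau_1)$ and $V(\lk_\Delta\tau_2)$ are disjoint subsets of $V(\Delta)$, and the inequality $f_0(\lk_\Delta\tau_1)+f_0(\lk_\Delta\tau_2)\leq f_0(\Delta)$ follows from counting.

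I do not anticipate any real obstacle here; the lemma is combinatorial bookkeeping built on the two elementary facts that flagness turns cliques into faces and that facets cannot be enlarged. Part (a) is morally a special case of part (b) with $\tau_1=\{v_1\}$, $\tau_2=\{v_2\}$, but stating and proving them separately keeps the argument cleaner and makes the disjointness conclusion explicit.
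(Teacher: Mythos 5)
Your proof is correct and follows essentially the same route as the paper: in both parts you use flagness to upgrade "adjacent to every vertex of $\sigma$" to $\sigma\cup\{w\}\in\Delta$, contradicting maximality in (a) and yielding $w\in V(\lk_\Delta\sigma)$ (hence emptiness of the intersection when $\sigma$ is a facet) in (b). No gaps; the handling of the facet case via $V(\lk_\Delta\sigma)=\emptyset$ is, if anything, slightly cleaner than the paper's case split.
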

			\begin{proof}
				For part (a), if $v$ is a common vertex of $\lk_\Delta \sigma_1$ and $\lk_\Delta \sigma_2$, then $v$ must be adjacent to each vertex of $\sigma_1\cup \sigma_2=\sigma$. Thus, since $\Delta$ is flag, $\{v\}\cup\sigma\in \Delta$, which contradicts our assumption that $\sigma$ is a facet.
				
				For part (b), the inclusion $V(\lk_\Delta \tau_1)\cap V(\lk_\Delta \tau_2)
				\supseteq V(\lk_\Delta \sigma)$ holds for any simplicial complex. If $v\in V(\lk_\Delta \tau_1)\cap V(\lk_\Delta \tau_2)$, then $v\cup\tau_1, v\cup\tau_2\in \Delta$. Since $\Delta$ is flag, it follows that $v\cup\sigma\in \Delta$. If $\sigma$ is not a facet, then $v\in \lk_\Delta \sigma$. This implies $V(\lk_\Delta \tau_1)\cap V(\lk_\Delta \tau_2)
				\subseteq V(\lk_\Delta \sigma)$. However, if $\sigma$ is a facet, then $v\cup\sigma$ cannot a facet in $\Delta$. In this case, $V(\lk_\Delta \tau_1)\cap V(\lk_\Delta \tau_2)=V(\lk_\Delta \sigma)=\emptyset$, and so $f_0(\lk_\Delta \tau_1)+f_0(\lk_\Delta \tau_2)\leq f_0(\Delta)$. 
			\end{proof}
			
			Lemma \ref{dual edge} part (b) implies that if $\Delta$ is a flag 3-dimensional simplicial complex and $\sigma\in \Delta$ is a facet, then $\sum_{e\subseteq \sigma} f_0(\lk_\Delta e)\leq 3f_0(\Delta)$, where the sum is over the edges of $\sigma$. The following lemma suggests a better estimate on $\sum_{e\subseteq \sigma} f_0(\lk_\Delta e)$ if $\Delta$ is a flag weak 3-pseudomanifold. 
			\begin{lemma}\label{sum of link of edges in a facet}
				Let $\Delta$ be a flag weak 3-pseudomanifold on $n$ vertices. Then for any facet $\sigma=\{v_1,v_2,v_3,v_4\}$ of $\Delta$, $\sum_{e\subseteq \sigma} f_0(\lk_\Delta e)\leq n+16$, where the sum is over the edges of $\sigma$. If equality holds, then $\cup_{w\in \tau}V(\lk_\Delta w)=V(\Delta)$ for any ridge $\tau\subseteq\sigma$.
			\end{lemma}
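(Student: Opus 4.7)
The plan is to rewrite $\sum_{e\subseteq\sigma}f_0(\lk_\Delta e)$ as a count of triangles of $\Delta$ meeting $\sigma$ in an edge, to double-count this quantity through outside vertices, and then to invoke the weak pseudomanifold hypothesis to bound how many outside vertices can be adjacent to three vertices of $\sigma$.

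Since $\sigma\in\Delta$, for each edge $e\subseteq\sigma$ we have $\sigma\setminus e\subseteq V(\lk_\Delta e)$, contributing exactly $2$ to $f_0(\lk_\Delta e)$. Setting $L(e):=V(\lk_\Delta e)\setminus\sigma$ yields
\[
\sum_{e\subseteq\sigma}f_0(\lk_\Delta e)=12+\sum_{e\subseteq\sigma}|L(e)|,
\]
so it suffices to prove $\sum_{e\subseteq\sigma}|L(e)|\leq n+4$. For each $w\in V(\Delta)\setminus\sigma$, define $N(w):=\{v\in\sigma:\{v,w\}\in\Delta\}$. Because $\Delta$ is flag and $\sigma$ is a simplex, $w\in L(e)$ if and only if $e\subseteq N(w)$, so double counting gives $\sum_{e\subseteq\sigma}|L(e)|=\sum_{w\notin\sigma}\binom{|N(w)|}{2}$.

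Since $\sigma$ is a facet and $\Delta$ is flag, $|N(w)|\leq 3$ for every $w\notin\sigma$; otherwise $\{w\}\cup\sigma\in\Delta$ would properly contain $\sigma$. The decisive step, and the only place the weak pseudomanifold hypothesis enters, is to bound the number $k$ of $w\notin\sigma$ with $|N(w)|=3$. For such $w$, flagness makes $\{w\}\cup N(w)$ a facet sharing the ridge $N(w)\subseteq\sigma$ with $\sigma$, and since each ridge lies in exactly two facets, the map $w\mapsto N(w)$ is injective on this set; as $\sigma$ has only four ridges, $k\leq 4$. Therefore
\[
\sum_{w\notin\sigma}\binom{|N(w)|}{2}\leq 3k+(n-4-k)=2k+n-4\leq n+4,
\]
which gives the claimed bound $\sum_{e\subseteq\sigma}f_0(\lk_\Delta e)\leq n+16$.

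For the equality clause, both inequalities in the last display must be tight, forcing $k=4$ and $|N(w)|\geq 2$ for every $w\notin\sigma$. Given any ridge $\tau\subseteq\sigma$, the set $\sigma\setminus\tau$ is a single vertex, so $|N(w)|\geq 2$ rules out $N(w)\subseteq\sigma\setminus\tau$; hence every $w\notin\sigma$ is adjacent to some vertex of $\tau$. The vertices of $\sigma$ are trivially adjacent to vertices of $\tau$, so $\bigcup_{v\in\tau}V(\lk_\Delta v)=V(\Delta)$, as required. The main obstacle is the bound $k\leq 4$: it is the only use of the weak pseudomanifold hypothesis, and without it Lemma~\ref{dual edge}(b) applied to the three pairs of opposite edges gives only the much weaker estimate $\sum_{e\subseteq\sigma}f_0(\lk_\Delta e)\leq 3n$.
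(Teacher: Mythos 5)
Your proof is correct, and it reaches the same equality conclusion as the paper (every outside vertex is adjacent to at least two vertices of $\sigma$, hence to some vertex of every ridge $\tau\subseteq\sigma$). The underlying object being counted is the same as in the paper --- how many of the six edge links of $\sigma$ each vertex belongs to, since by Lemma \ref{dual edge}(b) $f_0(\lk_\Delta\{v_i,v_j\})=|V(\lk_\Delta v_i)\cap V(\lk_\Delta v_j)|$ --- but your bookkeeping is genuinely different and, to my mind, cleaner. The paper runs an inclusion--exclusion computation on the four sets $V_i=V(\lk_\Delta v_i)$, uses the pseudomanifold hypothesis to evaluate the triple intersections exactly ($|V_i\cap V_j\cap V_k|=2$), and then sorts vertices into three types to bound the resulting expression. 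You instead write the sum as $12+\sum_{w\notin\sigma}\binom{|N(w)|}{2}$ (the $12$ being precisely the contribution of the vertices of $\sigma$), cap each term at $\binom{3}{2}=3$ by flagness (no outside vertex can see all of $\sigma$, else $\sigma$ is not a facet), and bound the number of contribution-$3$ vertices by $4$ via injectivity of $w\mapsto N(w)$ into the ridges of $\sigma$. Two small advantages of your route: it only uses the ``at most two facets per ridge'' half of the weak pseudomanifold condition (the paper's exact evaluation of the triple intersections uses ``exactly two''), and the equality analysis falls out immediately ($k=4$ and $|N(w)|=2$ or $3$ for all $w\notin\sigma$), which is exactly the paper's statement that every vertex lies in at least two of the four vertex links.
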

			\begin{proof}
				Let $V_i=V(\lk_\Delta v_i)$ for $1\leq i\leq 4$. By Lemma \ref{dual edge} part (b), for any distinct $1\leq i,j\leq 4$, we have $V_i\cap V_j=V(\lk_\Delta \{v_i,v_j\})$ and $V_1\cap V_2\cap V_3\cap V_4=V(\lk_\Delta \sigma)=\emptyset.$ Also since $\Delta$ is a weak 3-pseudomanifold, any ridge of $\Delta$ is contained in exactly two facets. Hence $V_i\cap V_j\cap V_k=V(\lk_\Delta \{v_i,v_j,v_k\})$ is a set of cardinality two. By the inclusiong-exclusion principle, we obtain that
				\begin{align}\label{IE1}
				\begin{split}
				\sum_{1\leq i<j\leq 4}|V_i\cap V_j|&=-|V_1\cup V_2\cup V_3\cup V_4|+\sum_{1\leq i\leq 4}|V_i|+\sum_{1\leq i<j<k\leq 4}|V_i\cap V_j\cap V_k|-|V_1 \cap V_2\cap V_3\cap V_4|\\
				&=\sum_{1\leq i\leq 4}|V_i|-|V_1\cup V_2\cup V_3\cup V_4|+\binom{4}{3}\cdot 2\\
				&=(|V_1|+|V_2|-|V_1\cup V_2|)+(|V_3|+|V_4|-|V_3\cup V_4|)+|(V_1\cup V_2)\cap (V_3\cup V_4)|+8\\
				&=|V_1\cap V_2|+|V_3\cap V_4|+|(V_1\cup V_2)\cap (V_3\cup V_4)|+8.
				\end{split}
				\end{align}
				For simplicity, we denote the set $(V_1\cup V_2)\cap (V_3\cup V_4)$ as $\bar{V}$. Notice that by Lemma \ref{dual edge} part (b), any vertex $v\in \Delta$ belongs to at most one of the sets $V_1\cap V_2$ and $V_3\cap V_4$. We split the vertices of $\Delta$ into the following three types.
				\begin{enumerate}
					\item If $v\in V_1\cap V_2$ and $v\notin V_3\cup V_4$, or if $v\in V_3\cap V_4$ and $v\notin V_1\cup V_2$, then $v\notin \bar{V}$. Each of these vertices contributes 1 to the right-hand side of (\ref{IE1}).
					\item If $v\in V_i\cap V_j\cap V_k$ for some triple $\{i,j,k\}\subseteq [4]$, then $v$ belongs to either $V_1\cap V_2$ or $V_3\cap V_4$, and $v\in \bar{V}$. By Lemma \ref{dual edge} part (a), every pair of ridges in $\sigma$ has disjoint links. Since $|V_i\cap V_j\cap V_k|=2$, the number of such vertices is exactly 8, and each of them contributes 2 to the right-hand side of (\ref{IE1}). 
					\item If $v\notin V_1\cap V_2$ and $v\notin V_3\cap V_4$, then $v$ contributes to the right-hand side of (\ref{IE1}) at most 1. This case occurs only when $v\in \bar{V}$, that is, when $v$ belongs to one of $V_1$ and $V_2$, and one of $V_3$ and $V_4$.
				\end{enumerate}
				Hence $\sum_{\{i,j\}\subseteq [4]}|V_i\cap V_j|\leq n+8+8=n+16$. Furthermore, if equality holds, then for every vertex $v$ in $\Delta$, either $v\in V_1\cap V_2$, or $v\in V_3\cap V_4$, or $v\in \bar{V}$. This implies that every vertex in $\Delta$ belongs to at least two of the four links $\lk_\Delta v_1,\cdots, \lk_\Delta v_4$. This proves the second claim.
			\end{proof}
			\begin{lemma}\label{sum of link of vertices in a facet}
				Let $\Delta$ be a flag weak 3-pseudomanifold on $n$ vertices, and let $\sigma=\{v_1,v_2,v_3,v_4\}$ be an arbitrary facet of $\Delta$. Then $\sum_{1\leq i\leq 4} f_0(\lk_\Delta v_i)\leq 2n+8$. If equality holds, then $\cup_{w\in \tau}V(\lk_\Delta w)=V(\Delta)$ for any ridge $\tau\subseteq\sigma$.
			\end{lemma}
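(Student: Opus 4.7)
The plan is to prove this by inclusion–exclusion on the vertex sets $V_i := V(\lk_\Delta v_i)$ for $i=1,\ldots,4$, and then reduce the inequality to Lemma \ref{sum of link of edges in a facet}.

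First I would record the facts that feed into inclusion–exclusion. By Lemma \ref{dual edge}(b), for distinct $i,j$ we have $V_i\cap V_j = V(\lk_\Delta\{v_i,v_j\})$, and $V_1\cap V_2\cap V_3\cap V_4 = V(\lk_\Delta \sigma) = \emptyset$ since $\sigma$ is a facet. For any triple $\{i,j,k\}\subseteq\{1,2,3,4\}$ the set $\{v_i,v_j,v_k\}$ is a ridge, so by the weak pseudomanifold property its link consists of exactly two vertices (namely $v_\ell$, the remaining vertex of $\sigma$, together with the unique vertex of the other facet containing this ridge); hence $|V_i\cap V_j\cap V_k|=2$.

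Next I would apply four-set inclusion–exclusion to obtain
\[
\sum_{i=1}^4|V_i| \;=\; \Big|\bigcup_{i=1}^4 V_i\Big| \;+\; \sum_{1\le i<j\le 4}|V_i\cap V_j| \;-\; \sum_{1\le i<j<k\le 4}|V_i\cap V_j\cap V_k| \;+\; |V_1\cap V_2\cap V_3\cap V_4|.
\]
The last term vanishes and the third equals $4\cdot 2 = 8$, so the identity becomes
\[
\sum_{i=1}^4|V_i| \;=\; \Big|\bigcup_{i=1}^4 V_i\Big| \;+\; \sum_{1\le i<j\le 4}|V_i\cap V_j| \;-\; 8.
\]
The union is obviously bounded by $n$, and Lemma \ref{sum of link of edges in a facet} gives $\sum_{i<j}|V_i\cap V_j|\le n+16$. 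Adding these bounds yields $\sum_{i=1}^4|V_i|\le n+(n+16)-8 = 2n+8$, which is the desired inequality.

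For the equality clause, if $\sum_{i=1}^4 f_0(\lk_\Delta v_i)=2n+8$, then both upper bounds used above must be tight; in particular $\sum_{i<j}|V_i\cap V_j| = n+16$, and the equality statement of Lemma \ref{sum of link of edges in a facet} forces $\bigcup_{w\in\tau}V(\lk_\Delta w) = V(\Delta)$ for every ridge $\tau\subseteq\sigma$. There is no real obstacle here, since the hard combinatorial work has already been done in the previous lemma; the only care needed is to set up the inclusion–exclusion correctly and to use the weak pseudomanifold hypothesis to pin down the size of triple intersections.
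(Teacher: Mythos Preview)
Your proposal is correct and essentially identical to the paper's proof: both apply four-set inclusion--exclusion to the $V_i$, use Lemma~\ref{dual edge}(b) together with the weak pseudomanifold hypothesis to evaluate the triple and quadruple intersections, and then invoke Lemma~\ref{sum of link of edges in a facet} plus the trivial bound $|\bigcup_i V_i|\le n$. The equality argument is likewise the same.
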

			\begin{proof}
				As in Lemma \ref{sum of link of edges in a facet}, we let $V_i=V(\lk_\Delta v_i)$. By the inclusion-exclusion principle,
				\begin{multline*}
				\begin{split}
				\sum_{1\leq i\leq 4} |V_i| &= \sum_{1\leq i<j\leq 4}|V_i\cap V_j|- \sum_{1\leq i<j<k\leq 4}|V_i\cap V_j\cap V_k|+|V_1 \cap V_2\cap V_3\cap V_4|+|V_1\cup V_2\cup V_3\cup V_4|\\
				&= \sum_{1\leq i<j\leq 4}|V_i\cap V_j|+ |V_1\cup V_2\cup V_3\cup V_4| -8\\
				&\leq n+16+n-8=2n+8.
				\end{split}
				\end{multline*}
				The last inequality follows from Lemma \ref{sum of link of edges in a facet} and the fact that $|V_1\cup V_2\cup V_3\cup V_4|\leq |V(\Delta)|=n$. The second claim also follows from Lemma \ref{sum of link of edges in a facet}.
			\end{proof}
			
			Now we are ready to prove the main result in this section.
			\begin{theorem}\label{theorem: flag weak 3-pseudomanifolds}
				Let $\Delta$ be a flag 3-dimensional Eulerian complex on $n$ vertices. Then $f_1(\Delta)\leq f_1(J_2(n))$.
			\end{theorem}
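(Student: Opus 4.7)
The plan is to globally average the local facet inequality from Lemma~\ref{sum of link of vertices in a facet} over all facets of $\Delta$ and then apply Cauchy--Schwarz to convert a sum of squared vertex degrees into a quadratic inequality in $f_1$ alone.

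First, I would sum the estimate $\sum_{v\in\sigma} f_0(\lk_\Delta v)\le 2n+8$ over every facet $\sigma$ of $\Delta$. Swapping the order of summation, the number of facets of $\Delta$ containing a given vertex $v$ equals $f_2(\lk_\Delta v)$, so the left-hand side becomes $\sum_v f_2(\lk_\Delta v)\,f_0(\lk_\Delta v)$, while the right-hand side is $(2n+8)f_3=(2n+8)(f_1-n)$ by Lemma~\ref{f number:3-manifold}. Since $\Delta$ is Eulerian, every link $\lk_\Delta v$ is a pure $2$-dimensional Eulerian complex, and the same kind of double count as in Lemma~\ref{f number:3-manifold} (every edge of the link lies in exactly two triangles, and $f_0-f_1+f_2=2$) yields the identity $f_2(L)=2f_0(L)-4$ for any such $L$. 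Substituting this into the previous inequality and using the handshake identity $\sum_v f_0(\lk_\Delta v)=2f_1$, I obtain
\[
2\sum_{v\in V(\Delta)}f_0(\lk_\Delta v)^2 \;\le\; 8f_1+(2n+8)(f_1-n).
\]

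To finish, I would apply Cauchy--Schwarz in the form $\sum_v f_0(\lk_\Delta v)^2\ge \bigl(\sum_v f_0(\lk_\Delta v)\bigr)^2/n = 4f_1^2/n$, substitute this lower bound, and clear denominators to arrive at the quadratic inequality
\[
4f_1^2-(n^2+8n)f_1+n^3+4n^2\;\le\;0.
\]
Its discriminant is exactly $n^4$, so the two roots are $n$ and $n^2/4+n$, forcing $f_1\le n^2/4+n$. Since $f_1$ is an integer and a direct computation from $J_2(n)=C_{\lceil n/2\rceil}*C_{\lfloor n/2\rfloor}$ gives $f_1(J_2(n))=\lfloor n^2/4\rfloor+n$, this yields the desired bound $f_1(\Delta)\le f_1(J_2(n))$.

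The main obstacle is really just verifying that the constant $2n+8$ in Lemma~\ref{sum of link of vertices in a facet} is tight enough: after summing over facets and applying Cauchy--Schwarz, the discriminant of the resulting quadratic in $f_1$ must collapse to the perfect square $n^4$ in order for the method to produce precisely the right upper bound. Beyond that observation, the argument is a short chain of algebraic manipulations with no essential difficulties.
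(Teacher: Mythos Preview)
Your proposal is correct and follows essentially the same argument as the paper: both sum the facet inequality from Lemma~\ref{sum of link of vertices in a facet} over all facets, reinterpret the left-hand side via double counting as $\sum_v a_v(2a_v-4)$ using the Eulerian relation $f_2(\lk_\Delta v)=2a_v-4$, and then invoke Cauchy--Schwarz/convexity on the degrees $a_v$ (with $\sum_v a_v=2f_1$) to obtain a quadratic inequality in $f_1$. The only cosmetic difference is that the paper factors the resulting inequality directly as $(f_1-n)\bigl(\tfrac{8}{n}f_1-(2n+8)\bigr)\le 0$, whereas you compute the discriminant; both yield the same roots $n$ and $\tfrac{n^2}{4}+n$.
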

			\begin{proof}
				We denote the vertices of $\Delta$ by $v_1,v_2,\cdots, v_n$, and we let $a_i=f_0(\lk_\Delta v_i)$. Since $\lk_\Delta v_i$ is a 2-dimensional Eulerian complex, the $f$-numbers of $\lk_\Delta v_i$ satisfy the relations
				\[f_2-f_1+f_0=2,\enspace 3f_2=2f_1.\]
				Hence $f_2(\lk_\Delta v_i)=2a_i-4$. By double counting, we obtain that
				\begin{equation}\label{double counting}
				\sum_{\sigma\in \Delta, |\sigma|=4}\sum_{v\in \sigma} f_0(\lk_\Delta v)=\sum_{i=1}^n f_0(\lk_\Delta v_i)\cdot\#\{\sigma:v_i\in \sigma, |\sigma|=4\}=\sum_{i=1}^{n}a_i(2a_i-4).
				\end{equation}
				By Lemma \ref{sum of link of vertices in a facet}, the left-hand side of (\ref{double counting}) is bounded above by $f_3(\Delta)(2n+8)$, which also equals $(f_1(\Delta)-n)(2n+8)$ by Lemma \ref{f number:3-manifold}. However, since $2f_1(\Delta)=\sum_{i=1}^n f_0(\lk_\Delta v_1)=\sum_{i=1}^{n}a_i$, the right-hand side of (\ref{double counting}) is bounded below by $n\cdot\frac{2f_1(\Delta)}{n}\cdot \big(\frac{4f_1(\Delta)}{n}-4 \big)$, and equality holds only if $a_i=\frac{2f_1(\Delta)}{n}$ for all $1\leq i\leq n$. Hence,
				\[(f_1(\Delta)-n)(2n+8)\geq n\cdot\frac{2f_1(\Delta)}{n}\cdot\Big(\frac{4f_1(\Delta)}{n}-4\Big).
				\]
				We simplify this inequality to get
				\[(f_1(\Delta)-n)\Big(\frac{8}{n}f_1(\Delta)-(2n+8)\Big)\leq 0.\]
				Since $f_1(\Delta)\geq n$, it follows that $f_1(\Delta)\leq\left\lfloor{\frac{n^2}{4}}\right\rfloor+n$, that is, $f_1(\Delta)\leq f_1(J_2(n))$. 
			\end{proof}
			
			The following corollary provides some properties of the maximizers of the face numbers in the class of flag 3-dimensional Eulerian complexes.
			\begin{corollary}\label{cor: equality case weak 3-pseudomanifold}
				Let $\Delta$ be a flag 3-dimensional Eulerian complex on $n$ vertices. Then $f_1(\Delta)=f_1(J_2(n))$ if and only if (i) $\left \lceil {\frac{n}{2}}\right\rceil$ vertices of $\Delta$ satisfy $f_0(\lk_\Delta v)=\left \lfloor {\frac{n}{2}}\right\rfloor+2$ while $\left \lfloor {\frac{n}{2}}\right\rfloor$ vertices satisfy $f_0(\lk_\Delta v)=\left \lceil {\frac{n}{2}}\right\rceil +2$, and (ii) $\Delta$ and all of its vertex links are connected.
			\end{corollary}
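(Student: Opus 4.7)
The plan is to unpack the equality case of the proof of Theorem~\ref{theorem: flag weak 3-pseudomanifolds}. The reverse implication is pure arithmetic: writing $a_i = f_0(\lk_\Delta v_i)$ and using the identity $\sum_i a_i = 2f_1(\Delta)$ (each edge $\{u,w\}$ contributes one vertex to each of $\lk_\Delta u$ and $\lk_\Delta w$), condition~(i) gives $\sum_i a_i = \lceil n/2 \rceil(\lfloor n/2 \rfloor + 2) + \lfloor n/2 \rfloor(\lceil n/2 \rceil + 2) = 2\lfloor n^2/4 \rfloor + 2n$, so $f_1(\Delta) = \lfloor n^2/4 \rfloor + n = f_1(J_2(n))$, independent of (ii).

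For the forward direction I would revisit the chain $(f_1 - n)(2n+8) \geq \sum_i a_i(2a_i - 4) \geq 8 f_1^2/n - 8 f_1$, whose first inequality comes from Lemma~\ref{sum of link of vertices in a facet} summed over all facets and whose second is Cauchy--Schwarz (with slack $2 \sum_i (a_i - \bar{a})^2$, where $\bar{a} := 2f_1/n$). Plugging in $f_1 = \lfloor n^2/4 \rfloor + n$, a direct computation shows the total gap between the outer terms is $0$ when $n$ is even and $(n^2-1)/(2n)$ when $n$ is odd. A standard exchange/convexity argument shows that, among integer sequences $(a_i)$ with integer sum $2f_1$, the quantity $\sum_i (a_i - \bar{a})^2$ is minimized precisely on sequences supported in $\{\lfloor \bar{a} \rfloor, \lceil \bar{a} \rceil\}$, and a short count verifies that this minimum equals exactly half of the total gap above. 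Hence both inequalities in the chain are forced to be tight: Cauchy--Schwarz sharpness pins the multiset $\{a_i\}$ to the distribution in~(i) (all $a_i = n/2+2$ when $n$ is even; otherwise exactly $\lceil n/2 \rceil$ copies of $(n+3)/2$ and $\lfloor n/2 \rfloor$ copies of $(n+5)/2$), while Lemma~\ref{sum of link of vertices in a facet} must hold with equality at every facet of~$\Delta$.

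To deduce~(ii), I would invoke the equality clause of Lemma~\ref{sum of link of vertices in a facet}: for every ridge $\tau$ of $\Delta$, $\bigcup_{w \in \tau} V(\lk_\Delta w) = V(\Delta)$. Connectedness of~$\Delta$ is then immediate, since in a disconnected complex this union would be confined to one component. For connectedness of a vertex link $\lk_\Delta v$, suppose for contradiction that $\lk_\Delta v$ has distinct components $L_1, L_2$; pick a facet $\sigma = \{v, v_1, v_2, v_3\}$ with $\{v_1, v_2, v_3\}$ a triangle in $L_1$ and a vertex $u \in L_2$. Then $u$ is adjacent to $v$, but if $u$ were adjacent to some $v_i$, the flag property would make $\{u, v, v_i\}$ a face of $\Delta$, placing $u$ in the component of $\lk_\Delta v$ containing $v_i$, namely $L_1$, a contradiction. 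Hence $u \notin V(\lk_\Delta v_1) \cup V(\lk_\Delta v_2) \cup V(\lk_\Delta v_3)$, contradicting the tightness.

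The main obstacle I anticipate is the odd-$n$ case: the chain of inequalities carries genuine slack $(n^2-1)/(2n)$, so one cannot immediately force all $a_i$'s equal. The fix is to exploit integrality: one shows that the unique (as multiset) integer minimizer of $\sum_i (a_i - \bar{a})^2$ under the sum constraint is precisely the distribution in~(i), and that its value exactly exhausts the available slack, leaving Lemma~\ref{sum of link of vertices in a facet} no room to be strict at any facet. Once this sharpness is pinned down, the flag structure delivers~(ii) with little further work.
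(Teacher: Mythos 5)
Your proposal is correct and follows essentially the same route as the paper: extract tightness from the double-counting chain in Theorem \ref{theorem: flag weak 3-pseudomanifolds} to get the degree distribution (i) and equality $\sum_{v\in\sigma}f_0(\lk_\Delta v)=2n+8$ at every facet, then use the equality clause of Lemma \ref{sum of link of vertices in a facet} together with flagness (Lemma \ref{flag prop}) to force connectedness of $\Delta$ and of each vertex link. In fact your treatment is somewhat more complete than the paper's: you verify the easy converse explicitly and, for odd $n$, you carry out the integrality/exchange analysis showing that the minimal Cauchy--Schwarz slack $2\sum_i(a_i-\bar a)^2=(n^2-1)/(2n)$ exactly exhausts the gap between the outer bounds (so the facet inequality is tight everywhere and the multiset $\{a_i\}$ is pinned down), a point the paper passes over with ``follows from the proof of Theorem \ref{theorem: flag weak 3-pseudomanifolds}.''
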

			\begin{proof}
				Part (i) of the claim follows from the proof of Theorem \ref{theorem: flag weak 3-pseudomanifolds}. Also by Theorem \ref{theorem: flag weak 3-pseudomanifolds}, if $f_1(\Delta)=f_1(J_2(n))$, then $\sum_{v\in \sigma}f_0(\lk_\Delta v)=2n+8$ for every facet $\sigma=\{v_1,v_2,v_3,v_4\}\in \Delta$. By Lemma \ref{sum of link of vertices in a facet}, every vertex of $\Delta$ belongs to $\cup_{1\leq i\leq 3}V(\lk_\Delta v_i)$, and hence $\Delta$ is connected. If there is a vertex $v$ such that $\lk_\Delta v$ is not connected, then we let $\tau_1=\{u_1,u_2,u_3\}$ and $\tau_2=\{w_1,w_2,w_3\}$ be two 2-faces in distinct connected components of $\lk_\Delta v$. Since $\Delta$ is flag,  by Lemma \ref{flag prop}, $\lk_\Delta v=\Delta[V(\lk_\Delta v)]$. Therefore no edges exist between $\tau_1$ and $\tau_2$. However, Theorem \ref{theorem: flag weak 3-pseudomanifolds} and Lemma \ref{sum of link of vertices in a facet} also imply that $\cup_{1\leq i\leq 3}V(\lk_\Delta u_i)=V(\Delta)$, contradicting the fact that $\{w_1,w_2,w_3\}\nsubseteq V(\lk_\Delta u_i)$ for all $1\leq i\leq 3$. Hence every vertex link in $\Delta$ is connected.
			\end{proof}
			
			The next lemma, which might be of interest in its own right, provides a sufficient condition for a flag complex to be the join of two of its links. 
			\begin{lemma}\label{equality case}
				Let $\Delta$ be a flag $(d-1)$-dimensional simplicial complex. If $\sigma=\tau_1\cupdot\tau_2$ is a facet of $\Delta$, where $\tau_1$ is an $i$-face of $\Delta$ and $\tau_2$ is a $(d-i-2)$-face of $\Delta$, then $V(\lk_\Delta \tau_1)\cup V(\lk_\Delta \tau_2)=V(\Delta)$ implies that $\Delta\subseteq \lk_\Delta \tau_1 * \lk_\Delta \tau_2$. Moreover, if $\Delta$ is a flag normal $(d-1)$-pseudomanifold, then $V(\lk_\Delta \tau_1)\cup V(\lk_\Delta \tau_2)=V(\Delta)$ if and only if $\Delta=\lk_\Delta \tau_1 * \lk_\Delta \tau_2$.
			\end{lemma}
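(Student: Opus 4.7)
The plan for the first assertion is direct. Set $A := V(\lk_\Delta \tau_1)$ and $B := V(\lk_\Delta \tau_2)$. Since $\sigma = \tau_1 \cupdot \tau_2$ is a facet, Lemma \ref{dual edge}(b) forces $A \cap B = V(\lk_\Delta \sigma) = \emptyset$, so the hypothesis $A \cup B = V(\Delta)$ upgrades to the partition $V(\Delta) = A \cupdot B$. For any face $\rho \in \Delta$, split it as $\rho = (\rho \cap A) \cupdot (\rho \cap B)$. Each piece is a face of $\Delta$, and Lemma \ref{flag prop} identifies $\lk_\Delta \tau_1 = \Delta[A]$ and $\lk_\Delta \tau_2 = \Delta[B]$, giving $\rho \cap A \in \lk_\Delta \tau_1$ and $\rho \cap B \in \lk_\Delta \tau_2$. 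Hence $\rho \in \lk_\Delta \tau_1 * \lk_\Delta \tau_2$.

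For the second assertion, the implication from $\Delta = \lk_\Delta \tau_1 * \lk_\Delta \tau_2$ to $V(\lk_\Delta \tau_1) \cup V(\lk_\Delta \tau_2) = V(\Delta)$ is immediate from the definition of join. For the converse, set $\Delta' := \lk_\Delta \tau_1 * \lk_\Delta \tau_2$; the first assertion supplies $\Delta \subseteq \Delta'$, and both complexes are pure of dimension $d-1$ (a facet of $\Delta'$ has $(d-i-1)+(i+1)=d$ vertices). It remains to promote this inclusion to $\Delta = \Delta'$, which I plan to do by propagating membership in $\Delta$ along the dual graph of $\Delta'$.

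Before running that argument, I would verify that $\Delta'$ is itself a normal $(d-1)$-pseudomanifold. The two factors $\lk_\Delta \tau_1$ and $\lk_\Delta \tau_2$ are normal pseudomanifolds (by the link property recalled just before Lemma \ref{lemma: normal pseudomanifold}); connectedness of $\Delta'$ is immediate from the join construction; a ridge of $\Delta'$ has the form $\alpha \cupdot \beta$ with exactly one of $\alpha,\beta$ a ridge of its factor, so by the weak pseudomanifold property of that factor it lies in exactly two facets of $\Delta'$; and connectedness of links of codimension-at-least-two faces in $\Delta'$ follows from the identity $\lk_{\Delta'}(\alpha \cupdot \beta) = \lk_{\lk_\Delta \tau_1}\alpha * \lk_{\lk_\Delta \tau_2}\beta$ together with the fact that a join with any nonempty complex is connected. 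A short induction on dimension then upgrades connectedness of a normal pseudomanifold to connectedness of its dual graph (strong connectedness), in particular for $\Delta'$.

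With strong connectedness of $\Delta'$ in hand, I fix any facet $\sigma'$ of $\Delta'$, choose a sequence $\sigma = \sigma_0, \sigma_1, \ldots, \sigma_k = \sigma'$ of facets in $\Delta'$ with $\sigma_j$ and $\sigma_{j+1}$ sharing a ridge $\rho_j$, and induct on $j$ to prove $\sigma_j \in \Delta$. For the inductive step, $\rho_j$ is a ridge of $\Delta$ (it lies in $\sigma_j \in \Delta$), so the pseudomanifold property of $\Delta$ provides exactly two facets of $\Delta$ through $\rho_j$; both are facets of $\Delta'$ via $\Delta \subseteq \Delta'$, but $\Delta'$ has only $\sigma_j$ and $\sigma_{j+1}$ as facets through $\rho_j$, forcing $\sigma_{j+1} \in \Delta$. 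This yields $\Delta = \Delta'$. The main obstacle in the plan is the bookkeeping-heavy verification that $\Delta'$ is a normal pseudomanifold — particularly the codimension-one cases of $\alpha$ or $\beta$ being a ridge or facet of its factor — after which the propagation step is essentially formal.
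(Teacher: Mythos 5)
Your proposal is correct, but for the ``moreover'' part it takes a genuinely different route from the paper. For the containment $\Delta\subseteq \lk_\Delta \tau_1 * \lk_\Delta \tau_2$ you argue just as the paper does in spirit: Lemma \ref{dual edge}(b) makes $V(\lk_\Delta\tau_1)$ and $V(\lk_\Delta\tau_2)$ a partition of $V(\Delta)$, Lemma \ref{flag prop} makes both links induced, and every face splits across the partition. For the equality in the normal pseudomanifold case, however, the paper never leaves $\Delta$: flagness gives, for every facet $\alpha$ of $\lk_\Delta\tau_1$, the containment $\lk_\Delta\alpha\subseteq\lk_\Delta\tau_2$; since both are normal pseudomanifolds of the same dimension and no proper subcomplex of a normal pseudomanifold is a normal pseudomanifold of equal dimension, $\lk_\Delta\alpha=\lk_\Delta\tau_2$ (and symmetrically for facets of $\lk_\Delta\tau_2$), which immediately places every facet of the join inside $\Delta$. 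You instead work at the top dimension: you verify that $\Delta'=\lk_\Delta\tau_1*\lk_\Delta\tau_2$ is itself a normal (hence weak and strongly connected) pseudomanifold and propagate facet-membership in $\Delta$ along the dual graph of $\Delta'$, using only that $\Delta$ is a weak pseudomanifold with the two prescribed links. The two arguments are cousins: your dual-graph propagation is in effect a proof of the very rigidity principle the paper cites, applied to $\Delta\subseteq\Delta'$ rather than one dimension down to links inside $\Delta$. The paper's route buys brevity, since it needs no facts about the join at all and so the bookkeeping you flag (normality and strong connectedness of $\Delta'$) never arises; your route buys explicitness, making the rigidity principle transparent and isolating exactly where normality versus the weak pseudomanifold condition is used. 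Both are sound, provided you do carry out the routine verifications you list for $\Delta'$ (including the low-dimensional factor cases where a link is $\mathbb{S}^0$).
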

			\begin{proof}
				Since $\Delta$ is flag, $\Delta[V(\lk_\Delta \tau_j)]=\lk_\Delta \tau_j$ for $j=1,2$. Hence for every $i$-face $\tau_2'$ in $\lk_\Delta \tau_1$, the link $\lk_\Delta \tau_2'$ does not contain any vertex in $\lk_\Delta \tau_1$. This implies $\lk_\Delta \tau_2'\subseteq \lk_\Delta \tau_2$. Similarly, for every $(d-i-2)$-face $\tau_1'\in \lk_\Delta \tau_2$, we have $\lk_\Delta \tau_1'\subseteq \lk_\Delta \tau_1$. Thus, $\Delta\subseteq \lk_\Delta \tau_1 * \lk_\Delta \tau_2$.
				
				If $\Delta$ is a normal pseudomanifold, then both $\lk_\Delta \tau_2$ and $\lk_\Delta \tau_2'$ are normal pseudomanifolds. Since no proper subcomplex of a normal pseudomanifold can be a normal pseudomanifold of the same dimension, it follows that $\lk_\Delta \tau_2'=\lk_\Delta \tau_2$. Similarly, for every $(d-i-2)$-face $\tau_1'\in \lk_\Delta \tau_2$, $\lk_\Delta \tau_1'=\lk_\Delta \tau_1$. Hence $\Delta=\lk_\Delta \tau_1*\lk_\Delta \tau_2$.
			\end{proof}
			\begin{remark}\label{remark: weak not normal}
				The second result in Lemma \ref{equality case} does not hold for flag weak pseudomanifolds, even assuming connectedness. Indeed, let $L_1,\cdots,L_4$ be four distinct circles of length $\geq 4$. Then $\Delta=(L_1*L_3)\cup(L_2*L_3)\cup (L_1*L_4)$ is a flag weak 3-pseudomanifold. If $\tau_1$ and $\tau_2$ are edges in $L_1$ and $L_3$ respectively, then $\lk_\Delta \tau_1=L_3\sqcup L_4$ and $\lk_\Delta \tau_2=L_1\sqcup L_2$. Hence $V(\lk_\Delta \tau_1)\cup V(\lk_\Delta \tau_2)=V(\Delta)$. However, $\Delta$ is a proper subcomplex of $\lk_\Delta \tau_1 *\lk_\Delta \tau_2$.
			\end{remark} 
			In Theorem \ref{theorem: 3-manifold} we proved that the maximizer of the face numbers is unique in the class of flag 3-manifolds on $n$ vertices. Is this also true for flag 3-dimensional Eulerian complexes? Corollary \ref{cor: equality case weak 3-pseudomanifold} implies that if the case of equality is not a join of two circles, then some of its edge links are not connected. Motivated by the example in Remark \ref{remark: weak not normal}, we construct a family of flag 3-dimensional Eulerian complexes on $n$ vertices that have the same $f$-numbers as those of $J_2(n)$.
			\begin{example}\label{Example: Eulerian maximizers}
				We write $C_i$ to denote a circle of length $i$. For a fixed number $n\geq 8$, let $a_1,a_2,\cdots, a_s$, $b_1,b_2,\cdots, b_t\geq 4$ be integers such that \[\sum_{1\leq i\leq s}a_i=\left\lfloor\frac{n}{2}\right\rfloor \enspace \mathrm{and}\; \sum_{1\leq j \leq t}b_j=\left\lceil\frac{n}{2}\right\rceil.\]
				We claim that $\Delta=\bigcup_{1\leq i\leq s, 1\leq j\leq t}(C_{a_i}*C_{b_j})$ is flag and Eulerian, where all $C_\cdot$ have disjoint vertex sets. Since the circles $C_{a_i}$ and $C_{b_j}$ are of length $\geq 4$, they are flag and hence $\Delta$ is also flag. Also any ridge $\tau$ in $\Delta$ can be expressed as $\tau=v\cup e$, where $v$ is a vertex of $C_{a_i}$ and $e$ is an edge of $C_{b_j}$ (or $v\in C_{b_j}$ and $e\in C_{a_i}$) for some $i,j$. By the construction of $\Delta$, the ridge $\tau$ is contained in exactly two facets $\{v,v'\}\cup e$ and $\{v,v''\}\cup e$ of $\Delta$, where $v'$ and $v''$ are neighbors of $v$ in the circle $C_{a_i}$ (or $C_{b_j}$). Hence the links of ridges in $\Delta$ are Eulerian. Since the edge links in $\Delta$ are either circles or disjoint unions of circles, and the vertex links in $\Delta$ are suspensions of disjoint union of circles, these links are also Eulerian. Finally, the vertices in $C_{a_i}$ have degree $\left\lceil\frac{n}{2}\right\rceil+2$ and the vertices in $C_{b_j}$ have degree $\left\lfloor\frac{n}{2}\right\rfloor$, and thus $f_1(\Delta)=f_1(J_2(n))$. A simple computation also shows that $f_2(\Delta)=f_2(J_2(n))$ and $f_3(\Delta)=f_3(J_2(n))$. Hence $\chi(\Delta)=\chi(J_2(n))$, and $\Delta$ is Eulerian.
			\end{example}
			
			We denote the set of all complexes on $n$ vertices constructed in Example \ref{Example: Eulerian maximizers} as $GJ(n)$. It turns out that $GJ(n)$ is exactly the set of maximizers of the face numbers in the class of flag 3-dimensional Eulerian complex on $n$ vertices. To prove this, we begin with the following lemma.
			\begin{lemma}\label{lem: suspension of disjoint circles}
				Let $\Delta$ be a flag 3-dimensional Eulerian complex on $n$ vertices. If $f_1(\Delta)=f_1(J_2(n))$, then every vertex link is the suspension of disjoint union of circles.
			\end{lemma}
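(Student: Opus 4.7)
The plan is to fix an arbitrary vertex $v \in \Delta$ and show that $L := \lk_\Delta v$—a $2$-dimensional flag Eulerian complex by Lemma \ref{flag prop}—has the form $\Sigma X$ for some disjoint union of circles $X$. The strategy is to produce two non-adjacent vertices $p, q \in V(L)$ with $V(L) \setminus \{p, q\} \subseteq V(\lk_L p) \cap V(\lk_L q)$ and then invoke Lemma \ref{equality case} inside $L$.

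The first ingredient is a local covering condition for $L$. From the proof of Theorem \ref{theorem: flag weak 3-pseudomanifolds} and Lemma \ref{sum of link of vertices in a facet}, the equality $f_1(\Delta) = f_1(J_2(n))$ forces $\bigcup_{u \in \tau} V(\lk_\Delta u) = V(\Delta)$ for every ridge $\tau$ of $\Delta$. Applied to ridges of the form $\{v, u_1, u_2\}$ where $\{u_1, u_2\}$ is an edge of $L$, this implies that every $w \in V(\Delta) \setminus (V(L) \cup \{v\})$ is adjacent in $\Delta$ to at least one of $u_1, u_2$. Equivalently, $I_w := V(L) \setminus V(\lk_\Delta w)$ is an independent set in $L$ for every such $w$. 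Applied instead to ridges disjoint from $v$, the same covering condition forces $\Delta[V(\Delta) \setminus (V(L) \cup \{v\})]$ to be triangle-free.

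The main obstacle is to produce the suspension pair. The natural candidate is $\{p, q\} = I_{w_0}$ for some $w_0 \in V(\Delta) \setminus (V(L) \cup \{v\})$ with $|I_{w_0}| = 2$, which would immediately give $p \not\sim q$ in $L$ by independence of $I_{w_0}$. The challenge is to argue that such a $w_0$ exists and that the resulting $p, q$ are each adjacent in $L$ to all of $V(L) \setminus \{p, q\}$. A pure degree-count is insufficient: the inequality $f_0(L)^2 - 9 f_0(L) + 12 \geq 0$ is satisfied for every $f_0(L) \geq 8$ and so is a priori compatible with every vertex of $L$ having at least two non-neighbors. One must therefore combine the independent-set property above with the triangle-freeness of $\Delta[V(\Delta) \setminus (V(L) \cup \{v\})]$ and with the fact that edge links of $\Delta$ are disjoint unions of circles of length $\geq 4$.

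Once the pair $\{p, q\}$ is identified, I conclude as follows. Since $\lk_L q$ is a $1$-dimensional flag Eulerian complex—a disjoint union of circles of length $\geq 4$—it contains an edge $\{u', u''\}$. Both $p$ and $q$ are adjacent in $L$ to $u'$ and $u''$ (for $p$ because $\{u', u''\} \subseteq V(L) \setminus \{p, q\}$, and for $q$ because $\{u', u''\} \subseteq V(\lk_L q)$), so flagness yields $\{p, q\} \subseteq V(\lk_L \{u', u''\})$; the Eulerian identity $|V(\lk_L \{u', u''\})| = 2$ then forces $\lk_L \{u', u''\} = \{p, q\}$. Applying Lemma \ref{equality case} to the facet $\{p, u', u''\}$ of $L$ with partition $\tau_1 = \{p\}$, $\tau_2 = \{u', u''\}$, the covering $V(\lk_L \tau_1) \cup V(\lk_L \tau_2) = V(L)$ holds and yields $L \subseteq \lk_L p * \{p, q\} = \Sigma(\lk_L p)$. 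Both sides are $2$-dimensional Eulerian complexes on the same vertex set, so the relations $f_1 = 3 f_0 - 6$ and $f_2 = 2 f_0 - 4$ (valid for any $2$-dimensional Eulerian complex) force them to have the same $f$-vector; the inclusion is therefore an equality, and $L = \Sigma(\lk_L p)$ is the desired suspension of a disjoint union of circles.
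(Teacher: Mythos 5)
Your preparatory reductions are correct (the equality case of Lemma \ref{sum of link of vertices in a facet} does give $\bigcup_{u\in\tau}V(\lk_\Delta u)=V(\Delta)$ for every ridge $\tau$, hence the independence of the sets $I_w$ and the triangle-freeness of $\Delta[V(\Delta)\setminus(V(L)\cup\{v\})]$), and the endgame is sound: once you have non-adjacent $p,q\in V(L)$ adjacent to every other vertex of $L$, the identification $\lk_L\{u',u''\}=\{p,q\}$, Lemma \ref{equality case} applied to the facet $\{p,u',u''\}$, and the comparison of $f$-vectors of the two $2$-dimensional Eulerian complexes $L$ and $\lk_L p * \{p,q\}$ do force $L=\lk_L p*\{p,q\}$. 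But there is a genuine gap, and you name it yourself: the existence of the suspension pair is never proved. That existence is essentially the whole content of the lemma, and the sentence ``one must therefore combine the independent-set property with triangle-freeness and the structure of edge links'' is a statement of intent, not an argument; it is not at all clear that those ingredients exclude, say, a vertex link that is a flag $2$-sphere in which every vertex has at least two non-neighbors, which is exactly what the lemma must rule out. So as written the proposal is incomplete.

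For comparison, the paper closes precisely this step by working entirely inside $L$ with a double count over its triangles (so your remark that ``a pure degree-count is insufficient'' is misleading: a counting argument does work, but per triangle rather than globally). For each triangle $\{v_1,v_2,v_3\}$ of $L$, the covering condition restricted to $V(L)$ gives $\bigcup_i V(\lk_L v_i)=V(L)$; inclusion-exclusion, using that each edge of the Eulerian complex $L$ lies in exactly two triangles and that $\bigcap_i V(\lk_L v_i)=\emptyset$, yields $\sum_{i=1}^3 f_0(\lk_L v_i)=f_0(L)+6$. Summing over all $2f_0(L)-4$ triangles and using that each vertex $u$ lies in exactly $f_0(\lk_L u)$ triangles (its link in $L$ is a disjoint union of circles, so $f_0=f_1$ there) gives $\sum_u f_0(\lk_L u)^2=2(f_0(L)-2)(f_0(L)+6)$. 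Since $\sum_u f_0(\lk_L u)=6f_0(L)-12$ and each degree lies between $4$ and $f_0(L)-2$, convexity forces exactly two vertices of degree $f_0(L)-2$ and all others of degree $4$; for $f_0(L)>6$ these two vertices are non-adjacent (by flagness of $L$) and are your $p,q$, while $f_0(L)=6$ gives the octahedron directly. If you want to salvage your outline, you need either to reproduce an argument of this strength or to supply a concrete derivation of the pair from your three ingredients; at present that derivation is missing.
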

			\begin{proof}
				Assume that $v$ is an arbitrary vertex of $\Delta$ and denote $\lk_\Delta v$ by $\Gamma$. Since $f_1(\Delta)=f_1(J_2(n))$, by the proof of Theorem \ref{theorem: flag weak 3-pseudomanifolds} and Lemma \ref{sum of link of vertices in a facet}, it follows that for every 2-face $\{v_1,v_2,v_3\}\in\Gamma$, we have $\cup_{1\leq i\leq 3}V(\lk_\Delta v_i)=V(\Delta)$. In particular, \[\cup_{1\leq i\leq 3}V(\lk_\Gamma v_i)=\cup_{1\leq i\leq 3}V(\lk_\Delta v_i[V(\Gamma)])=V(\Gamma).\]
				Since $f_0(\lk_\Gamma v_i\cap \lk_\Gamma v_j)=2$ for $1\leq i<j\leq 3$ and $f_0(\cap_{i=1}^{3}\lk_\Gamma v_i)=0$, by the inclusion-exclusion principle,
				\[\sum_{i=1}^{3}f_0(\lk_\Gamma v_i)=f_0(\Gamma)+\sum_{1\leq i< j\leq 3}f_0(\lk_\Gamma v_i\cap \lk_\Gamma v_j)-f_0(\cap_{i=1}^{3}\lk_\Gamma v_i)=f_0(\Gamma)+6.\]
				Also since $\Gamma$ is Eulerian, the $f$-vector of $\Gamma$ is $(f_0(\Gamma), 3f_0(\Gamma)-6, 2f_0(\Gamma)-4)$. Moreover, every vertex link in $\Gamma$ is the disjoint union of circles, and hence  $f_0(\lk_\Gamma v)=f_1(\lk_\Gamma v)$. By double counting,
				\begin{equation}\label{equ: double-counting}
					\sum_{\sigma\in\Gamma,|\sigma|=3}\sum_{v\in\sigma}f_0(\lk_\Gamma v)=\sum_{v\in\Gamma}f_0(\lk_\Gamma v)\cdot\#\{\sigma:v\in\sigma, |\sigma|=3\}=\sum_{v\in\sigma}f_0(\lk_\Gamma v) ^2.
				\end{equation}
				The left-hand side of (\ref{equ: double-counting}) equals $f_2(\Gamma)(f_0(\Gamma)+6)=2(f_0(\Gamma)-2)(f_0(\Gamma)+6)$. However, since $\sum_{v\in\sigma}f_0(\lk_\Gamma v)=2f_1(\Gamma)=6f_0(\Gamma)-12$, and every vertex link in $\Gamma$ has at least four vertices, the right-hand side of (\ref{equ: double-counting}) is bounded above by $2(f_0(\Gamma)-2)+(f_0(\Gamma)-2)\cdot 4^2=2(f_0(\Gamma)-2)(f_0(\Gamma)+6)$. Then the equality forces the right-hand side to obtain its maximum. Therefore, there exists two vertices $u_1,u_2\in\Gamma$ whose vertex links in $\Gamma$ have $f_0(\Gamma)-2$ vertices and the other vertex links have 4 vertices. If $f_0(\Gamma)=6$, then $\Gamma$ is the cross-polytope. Else if $f_0(\Gamma)>6$, then $f_0(\lk_\Gamma u_1)>4$. Since $\Gamma$ is flag, by Lemma \ref{flag prop}, $\Gamma[V(\lk_\Gamma u_1)]=\lk_\Gamma u_1$, and hence every vertex of $\lk_\Gamma u_1$ is not connected to $ f_0(\lk_\Gamma u_1)-3>1$ vertices in $\lk_\Gamma u_1$. This implies that $u_2\notin\lk_\Gamma u_1$. Hence $\lk_\Gamma u_2=\Gamma[V(\Gamma)\backslash\{u_1,u_2\}]=\lk_\Gamma u_1$, and $\Gamma$ is the join of $\lk_\Gamma u_1$ and two vertices $u_1,u_2$.
			\end{proof}
			\begin{theorem}\label{thm: Eulerian, case of equality}
				Let $\Delta$ be a flag 3-dimensional Eulerian complex on $n$ vertices. If $f_1(\Delta)=f_1(J_2(n))$, then $\Delta \in GJ(n)$.
			\end{theorem}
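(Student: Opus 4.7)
My plan is to reconstruct from $\Delta$ the bipartite structure of the complexes in Example \ref{Example: Eulerian maximizers} by singling out one vertex link and then exploiting the sharp degree information of Corollary \ref{cor: equality case weak 3-pseudomanifold}. First, I choose $v_0\in V(\Delta)$ with $f_0(\lk_\Delta v_0)=\lceil n/2\rceil+2$ (such a vertex exists by Corollary \ref{cor: equality case weak 3-pseudomanifold}(i)) and apply Lemma \ref{lem: suspension of disjoint circles} to write $\lk_\Delta v_0=\{p,q\}*L$, where $L=C_{b_1}\sqcup\cdots\sqcup C_{b_t}$ is a disjoint union of circles of length $\geq 4$ (the lower bound is automatic by flagness). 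Set $B:=V(L)$ and $A:=V(\Delta)\setminus B$; then $|B|=\lceil n/2\rceil$, $|A|=\lfloor n/2\rfloor$, $v_0,p,q\in A$, and Lemma \ref{flag prop} gives $\Delta[B]=L$, so $B$ already splits into the prescribed circles $C_{b_1},\ldots,C_{b_t}$.

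The main obstacle, and the heart of the proof, is to establish that $A\subseteq N_\Delta(b)$ for every $b\in B$. Fix $b\in B$. Since $\Delta[B]=L$ is $2$-regular, $|N_\Delta(b)\cap B|=2$, and so $\deg_\Delta(b)\leq|A|+2$. But Corollary \ref{cor: equality case weak 3-pseudomanifold}(i) confines $\deg_\Delta(b)$ to the two values $|A|+2$ and $|B|+2$, and combined with $|A|\leq|B|$ this forces $\deg_\Delta(b)=|A|+2$; therefore $|N_\Delta(b)\cap A|=|A|$, that is, $A\subseteq N_\Delta(b)$. Summing degrees over $A$ then yields $\sum_{a\in A}\deg_\Delta(a)=2f_1(\Delta)-|B|(|A|+2)=|A|(|B|+2)$, and since each $\deg_\Delta(a)\leq|B|+2$ again by Corollary \ref{cor: equality case weak 3-pseudomanifold}(i), every $a\in A$ must achieve this maximum, giving $|N_\Delta(a)\cap A|=\deg_\Delta(a)-|B|=2$.

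Hence $\Delta[A]$ is $2$-regular, and it is triangle-free: a triangle $\{a,a',a''\}\subseteq A$ together with any edge $\{b,b'\}$ of $L$ would form a $5$-clique in the graph of $\Delta$ and hence, by flagness, a $4$-face, contradicting $\dim\Delta=3$. Consequently $\Delta[A]=C_{a_1}\sqcup\cdots\sqcup C_{a_s}$ with each $a_i\geq 4$ and $\sum_i a_i=\lfloor n/2\rfloor$. The graph of $\Delta$ now consists of $\bigsqcup_i C_{a_i}$ on $A$, $\bigsqcup_j C_{b_j}$ on $B$, and the complete bipartite graph between $A$ and $B$; its maximal cliques are the tetrahedra $\{a,a',b,b'\}$ with $\{a,a'\}\in C_{a_i}$ and $\{b,b'\}\in C_{b_j}$. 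Since $\Delta$ is the flag complex of its graph, $\Delta=\bigcup_{i,j}(C_{a_i}*C_{b_j})\in GJ(n)$.
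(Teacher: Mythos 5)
Your argument is correct, and it takes a genuinely different route from the paper's. The paper also starts from Lemma \ref{lem: suspension of disjoint circles}, but then works locally at a well-chosen facet $\{v_1,v_2,v_1',v_2'\}$ (two suspension apices and two adjacent circle vertices): it invokes the equality case of Lemma \ref{sum of link of edges in a facet} to get $V(\lk_\Delta\{v_1,v_2\})\cupdot V(\lk_\Delta\{v_1',v_2'\})=V(\Delta)$, feeds this into the join criterion of Lemma \ref{equality case} to obtain $\Delta\subseteq \lk_\Delta\{v_1,v_2\}*\lk_\Delta\{v_1',v_2'\}$, and finishes with an edge count plus flagness. You instead use Lemma \ref{lem: suspension of disjoint circles} only once, to produce the bipartition $V(\Delta)=A\cupdot B$ with $\Delta[B]$ a $2$-regular induced union of circles (via Lemma \ref{flag prop}), and then run a purely degree-theoretic argument: the two-value degree distribution of Corollary \ref{cor: equality case weak 3-pseudomanifold}(i) pins $\deg(b)=|A|+2$ for $b\in B$, forcing complete bipartiteness between $A$ and $B$, after which a degree sum over $A$ pins $\deg(a)=|B|+2$, so $\Delta[A]$ is $2$-regular; triangle-freeness of $\Delta[A]$ comes from the $5$-clique obstruction in a flag $3$-complex, and flagness then reconstructs $\Delta$ as the clique complex of the resulting graph, which is exactly an element of $GJ(n)$. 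Both proofs ultimately rest on the equality analysis behind Theorem \ref{theorem: flag weak 3-pseudomanifolds} (yours through Corollary \ref{cor: equality case weak 3-pseudomanifold}(i), the paper's through the facet-sum lemmas), but your route bypasses Lemmas \ref{sum of link of edges in a facet} and \ref{equality case} entirely and reconstructs the global graph of $\Delta$ directly, which is arguably more transparent; the paper's route, in exchange, isolates the reusable join criterion of Lemma \ref{equality case}, which is of independent interest. Two small points worth making explicit in a final write-up: the circles in $\Delta[B]$ have length at least $4$ because a $3$-cycle in a flag induced $1$-complex would span a $2$-face, and the triangle-freeness argument uses that $B$ contains an edge, which holds since $L$ is a nonempty union of circles.
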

			\begin{proof}
				 By Lemma \ref{lem: suspension of disjoint circles}, we may assume that the link of vertex $v_1\in\Delta$ is the join of $C$ and two other vertices $v_2, v_3$, where $C$ is the disjoint union of circles. Then again by Lemma \ref{lem: suspension of disjoint circles}, the link of vertex $v_2$ is also the suspension of $C$. If $v_1'$ is any vertex of $C$ and its adjacent vertices in $C$ are $v_2', v_3'$, then by Lemma \ref{flag prop}, $\Delta[V(C)]=C$, and it follows that $f_0(\lk_\Delta v_i'\cap C)=2$ for $i=1,2$. Hence for $1\leq i,j\leq 2$,\[f_0(\lk_\Delta \{v_i,v_j'\})=f_0(\lk_\Delta v_i\cap\lk_\Delta v_j')\leq f_0(C\cap\lk_\Delta v_j')+2=4.\]
				 Furthermore, $V(\lk_\Delta \{v_1',v_2'\})$ is disjoint from $V(\lk_\Delta \{v_1,v_2\})$. So we obtain that 
				 \[\sum_{e\subseteq \{v_1',v_2',v_1,v_2\}}f_0(\lk_\Delta e)\leq n+4\cdot 4=n+16,\]
				 where the sum is over the edges of $\{v_1',v_2',v_1,v_2\}$.
				 Since $f_1(\Delta)=f_1(J_2(n))$, by the proof of Theorem \ref{theorem: flag weak 3-pseudomanifolds} and Lemma \ref{sum of link of edges in a facet}, it follows that this sum is exactly $n+16$. Hence $V(\lk_\Delta \{v_1,v_2\})\cupdot V(\lk_\Delta \{v_1',v_2'\})=V(\Delta)$. By Lemma \ref{equality case}, $\Delta\subseteq\lk_\Delta \{v_1,v_2\}*\lk_\Delta \{v_1',v_2'\}$. We count the number of edges in $\Delta$ to get
				 \[f_1(J_2(n))=f_1(\Delta)\leq f_1(\lk_\Delta \{v_1,v_2\}*\lk_\Delta \{v_1',v_2'\})=f_0(\lk_\Delta \{v_1,v_2\})\cdot f_0(\lk_\Delta \{v_1',v_2'\})+n\leq f_1(J_2(n)).\]
				Thus $f_1(\Delta)=f_1(\lk_\Delta \{v_1,v_2\}*\lk_\Delta \{v_1',v_2'\})$,  and the edge links $\lk_\Delta \{v_1,v_2\}, \lk_\Delta \{v_1',v_2'\}$ must be disjoint unions of circles on $\left\lceil\frac{n}{2}\right\rceil$ and $\left\lfloor\frac{n}{2}\right\rfloor$ vertices respectively. Since the flag complex $\Delta$ is determined by its graph, it follows that $\Delta=\lk_\Delta \{v_1,v_2\}*\lk_\Delta \{v_1',v_2'\}$, i.e., $\Delta\in GJ(n)$.
			\end{proof}
		\begin{remark}
			Theorem \ref{thm: Eulerian, case of equality} implies Theorem \ref{theorem: 3-manifold}. This is because every 3-manifold is Eulerian and the only complex in $GJ(n)$ that is also a 3-manifold is $J_2(n)$.
		\end{remark}
		\section{Concluding Remarks}
        We close this paper with a few remarks and open problems.
        
        As mentioned in the introduction, Klee \cite{K} verified that the Motzkin's UBC for polytopes holds for Eulerian complexes with sufficiently many vertices, and conjectured it holds for all Eulerian complexes. Can the upper bound conjecture for flag spheres also be extended to flag Eulerian complexes? Motivated by Theorem \ref{theorem: 3-manifold} and Theorem \ref{theorem: flag weak 3-pseudomanifolds}, we posit the following conjecture in the same spirit as Problem 17(i) from \cite{AH}:
        \begin{conjecture}\label{conjecture: general upper bound}
        	Let $\Delta$ be a flag $(2m-1)$-dimensional complex, where $m\geq 2$. Assume further that $\Delta$ is an Eulerian complex on $n$ vertices. Then $f_i(\Delta)\leq f_i(J_m(n))$ for all $i=1,\cdots, 2m-1$.
        \end{conjecture}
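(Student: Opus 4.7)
The plan has two stages: resolve the base case $m=2$ directly from the results already in the paper, and then attempt an induction on $m$ combined with a generalization of the Section~5 machinery. For $m=2$, the conjecture reduces to what has already been proved. Theorem~\ref{theorem: flag weak 3-pseudomanifolds} gives $f_1(\Delta)\le f_1(J_2(n))$ for any flag $3$-dimensional Eulerian $\Delta$, and Lemma~\ref{f number:3-manifold} pins down the full $f$-vector as $(f_0,f_1,2f_1-2f_0,f_1-f_0)$. Since $J_2(n)$ is itself Eulerian and matches both $f_0=n$ and the edge bound, it simultaneously maximizes $f_2$ and $f_3$, settling the conjecture for $m=2$.

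For $m\ge 3$ the strategy is to bound $f_1$ first and then bootstrap to higher $f_i$ via vertex links. For the $f_1$-bound, I would generalize Lemmas~\ref{dual edge}--\ref{sum of link of vertices in a facet}. Lemma~\ref{dual edge} holds verbatim in any dimension, so pairwise intersections $V(\lk_\Delta v_i)\cap V(\lk_\Delta v_j)=V(\lk_\Delta\{v_i,v_j\})$ remain controlled by links, while the weak-pseudomanifold condition forces every $(2m-1)$-wise intersection over a facet to have size $2$. A careful inclusion-exclusion over the vertices of a facet $\sigma=\{v_0,\ldots,v_{2m-1}\}$ should yield an estimate of the form
\[\sum_{v\in\sigma} f_0(\lk_\Delta v)\;\le\;\alpha_m\, n+\beta_m\]
with explicit constants $\alpha_m,\beta_m$. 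Combining this with the double-counting identity
\[\sum_{\sigma\text{ facet}}\sum_{v\in\sigma} f_0(\lk_\Delta v)\;=\;\sum_{v\in V(\Delta)} f_0(\lk_\Delta v)\cdot f_{2m-2}(\lk_\Delta v),\]
the Dehn--Sommerville relations for the even-dimensional Eulerian vertex links, and a convexity argument in $a_v=f_0(\lk_\Delta v)$ under the constraint $\sum_v a_v=2f_1(\Delta)$, should reproduce $f_1(\Delta)\le f_1(J_m(n))$, in direct analogy with the proof of Theorem~\ref{theorem: flag weak 3-pseudomanifolds}.

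To pass from $f_1$ to the higher face numbers $f_i$ with $2\le i\le 2m-1$, I would use the identity $(i+1)\,f_i(\Delta)=\sum_v f_{i-1}(\lk_\Delta v)$ together with an even-dimensional analog of the conjecture applied to each vertex link, which is a flag $(2m-2)$-dimensional Eulerian complex. If $f_{i-1}(\Gamma)\le f_{i-1}(J^*_{m-1}(|V(\Gamma)|))$ can be established for every flag even-dimensional Eulerian $\Gamma$, then optimizing over $a_v$ subject to $\sum_v a_v=2f_1(\Delta)\le 2f_1(J_m(n))$ and exploiting the join decomposition $J_m(n)=C_{n_1}*\cdots*C_{n_m}$ should deliver $f_i(\Delta)\le f_i(J_m(n))$ by induction on $m$.

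The main obstacle is precisely the need for this even-dimensional companion. In the $m=2$ proof the vertex links are $2$-dimensional Eulerian, so their $f$-vectors $(f_0,3f_0-6,2f_0-4)$ are fully determined by $f_0$, and it is this collapse that allows the double-counting in Theorem~\ref{theorem: flag weak 3-pseudomanifolds} to close. For $m\ge 3$ the links have dimension $2m-2\ge 4$ and their $f$-vectors are no longer pinned down; even the edge bound $f_1(\Gamma)\le f_1(J^*_{m-1}(|V(\Gamma)|))$ for flag even-dimensional Eulerian $\Gamma$ is not currently available, since the case $\dim\Gamma=4$ is known only for spheres via Gal \cite{G} and there is no evident way to transfer it to the Eulerian setting. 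Supplying this missing even-dimensional input — most plausibly via a refined inclusion-exclusion over facets or a $\gamma$-vector analysis adapted to flag Eulerian complexes — is the central technical step and seems to require ideas beyond those developed in the paper.
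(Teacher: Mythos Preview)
The statement is a \emph{conjecture}, and the paper does not prove it beyond the case $m=2$. Your treatment of $m=2$ is correct and matches the paper exactly: Theorem~\ref{theorem: flag weak 3-pseudomanifolds} supplies the bound on $f_1$, and Lemma~\ref{f number:3-manifold} then forces the bounds on $f_2$ and $f_3$, so all three inequalities hold.

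For $m\ge 3$ there is no proof in the paper to compare against; the paper explicitly leaves the conjecture open, remarking only that the special case $m=3$, $i=1$ is known for flag $5$-\emph{manifolds} (Theorem~\ref{thm: 5-manifold}) but not for flag Eulerian complexes, and that extending even this case appears to hinge on whether the Davis--Okun theorem holds in greater generality. Your proposal is therefore not a proof but an outline, and you correctly identify the central obstruction: in the $m=2$ argument the vertex links are $2$-dimensional Eulerian, so $f_2(\lk_\Delta v)=2a_v-4$ is a linear function of $a_v$, which is precisely what makes the double-counting identity~(\ref{double counting}) close into a quadratic inequality in $f_1(\Delta)$. For $m\ge 3$ the links have dimension $\ge 4$ and $f_{2m-2}(\lk_\Delta v)$ is no longer determined by $a_v$ alone, so the convexity step has no foothold without an auxiliary even-dimensional bound of the type you describe. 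That auxiliary input is not available in the paper (nor, to date, in the literature for Eulerian complexes), so your assessment that ``ideas beyond those developed in the paper'' are required is accurate and consistent with the paper's own concluding remarks.
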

        Theorem \ref{theorem: flag weak 3-pseudomanifolds} gives an affirmative answer in the case of $m=2$ and $1\leq i\leq 3$. The next case is $i=1$ and $m=3$. In this case, Theorem \ref{thm: 5-manifold} verifies Conjecture \ref{conjecture: general upper bound} for flag 5-manifolds. Since Gal's proof that the real rootedness conjecture holds for homology spheres of dimension less than five relies on the Davis-Okun Theorem, it appears that whether Theorem \ref{thm: 5-manifold} can be extended to the generality of flag Eulerian complexes may depend on whether the Davis-Okun Theorem continues to hold for a larger class of complexes.
        
        The above results and conjectures discuss odd-dimensional flag complexes. What happens in the even-dimensional cases? To this end, we pose the following strengthening of Conjecture 18 from \cite{AH}.
        
        Let $J_m^*(n):=\Sp^0*C_1*\cdots*C_m$, where each $C_i$ is a circle of length either $\left\lceil{\frac{n-2}{m}}\right \rceil$ or $\left\lfloor{\frac{n-2}{m}}\right \rfloor$, and the total number of vertices of $J_m^*(n)$ is $n\geq 4m+2$. Now we let $\mathcal{S}_{n}$ denote the set of flag 2-spheres on $n$ vertices, and define \[\mathcal{J}_m^*(n):=\{S*C_2*\cdots*C_m\,|\,S \in \mathcal{S}_{V(C_1)+2}\}.\] It is not hard to see that every element in $\mathcal{J}_m^*(n)$ is a flag $2m$-sphere. 
        \begin{conjecture}
        	Let $\Delta$ be a flag homology $2m$-sphere on $n$ vertices. Then $f_i(\Delta)\leq f_i(J_m^*(n))$ for all $i=1,\cdots, 2m$. If equality holds for some $1\leq i\leq 2m$, then $\Delta \in \mathcal{J}_m^*(n)$.
        \end{conjecture}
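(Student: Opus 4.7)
My plan is to induct on $m$, assuming throughout Conjectures~\ref{conj: Nevo-Petersen} and~\ref{conj: Lutz-Nevo} in all dimensions $<2m$. The base case $m=1$ is vacuous: every flag $2$-sphere on $n$ vertices has $f$-vector $(n,3n-6,2n-4)=f(J_1^*(n))$, and by definition $\mathcal{J}_1^*(n)=\mathcal{S}_n$ contains all of them.

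For the inequality in the inductive step, with $\Delta$ a flag $2m$-sphere, I would search for a non-edge $\{u_1,u_2\}\subseteq V(\Delta)$ maximizing the codegree $c:=|N(u_1)\cap N(u_2)|$. In the ideal case $c=n-2$, Lemma~\ref{flag prop} forces $L:=\Delta[V(\Delta)\setminus\{u_1,u_2\}]=\lk_\Delta u_1=\lk_\Delta u_2$, so $\Delta=\{u_1,u_2\}*L$ with $L$ a flag $(2m-1)$-sphere on $n-2$ vertices; applying Conjecture~\ref{conj: Nevo-Petersen} coordinate-wise yields
\[
f_i(\Delta)=f_i(L)+2f_{i-1}(L)\leq f_i(J_m(n-2))+2f_{i-1}(J_m(n-2))=f_i(J_m^*(n))
\]
for every $1\leq i\leq 2m$. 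The complementary regime $c<n-2$ is the principal obstacle: the naive analogue of Proposition~\ref{prop} --- pick a vertex $v$ of maximum degree $a$ and use $f_1(\lk_\Delta v)\leq f_1(J_m(a))$ combined with Lemmas~\ref{lemma: normal pseudomanifold} and~\ref{flag prop} --- only delivers the quadratic $-\frac{m+1}{2m}a^2+(n+2)a-n+2$, whose maximum at $a^*=\frac{m(n+2)}{m+1}<n-2$ strictly exceeds $f_1(J_m^*(n))$. I would close this gap by combining that quadratic with the double-counting identity $\sum_{w\in V(\Delta)} f_0(\lk_\Delta w)=2f_1(\Delta)$ and a refined analogue of Lemma~\ref{sum of link of vertices in a facet}, ultimately forcing $c=n-2$ whenever $f_1(\Delta)$ attains $f_1(J_m^*(n))$.

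For the equality analysis, the sub-case $c=n-2$ combined with Conjecture~\ref{conj: Lutz-Nevo} gives $L=J_m(n-2)$, whence $\Delta=(\Sp^0*C_1)*C_2*\cdots*C_m$ lies in $\mathcal{J}_m^*(n)$ since $\Sp^0*C_1\in\mathcal{S}_{|C_1|+2}$. The substantive case is when the would-be $2$-sphere factor of an extremal $\Delta$ admits no suspension pair; here I would instead locate a $(2m-3)$-face $\tau\in\Delta$ whose link is a flag $2$-sphere, together with a $2$-face $\tau'$ such that $\tau\cupdot\tau'$ is a facet of $\Delta$ and $V(\lk_\Delta\tau)\cup V(\lk_\Delta\tau')=V(\Delta)$. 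Lemma~\ref{equality case} then splits $\Delta=\lk_\Delta\tau*\lk_\Delta\tau'$, and Conjecture~\ref{conj: Lutz-Nevo} applied to the $(2m-3)$-sphere $\lk_\Delta\tau'$ identifies it as $J_{m-1}(n-|V(\lk_\Delta\tau)|)=C_2*\cdots*C_m$, giving $\Delta\in\mathcal{J}_m^*(n)$. Reduction of $f_i$-equality for $i\geq 2$ to the $f_1$ case follows by passing to appropriate links in the Nevo-Petersen/FFK framework. The chief structural obstacle is producing the face $\tau$: showing that every $f_1$-extremal flag $2m$-sphere splits as a non-trivial join does not follow from the bare edge count and will presumably require an iterated link argument exploiting the missing-face structure guaranteed by flagness.
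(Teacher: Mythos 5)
You should first note that the statement you set out to prove is not a theorem of this paper at all: it appears in the concluding remarks as an open conjecture (a strengthening of Conjecture 18 of \cite{AH}), so the paper contains no proof of it to compare yours with; the paper's actual results are the odd-dimensional cases (Theorem \ref{theorem: 3-manifold}, Proposition \ref{prop}, Theorem \ref{thm: 5-manifold}) and the 3-dimensional Eulerian results. Judged on its own terms, your proposal is a research plan rather than a proof, and the difficulty is not only that it is conditional on Conjectures \ref{conj: Nevo-Petersen} and \ref{conj: Lutz-Nevo} in all lower dimensions, which are themselves open (the paper establishes them only for flag 3-manifolds, plus the $f_1$-statement for flag 5-manifolds).

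Concretely, for the inequality you only handle the regime in which some non-edge has codegree $n-2$, i.e.\ $\Delta$ is a suspension; everything else is deferred to an unproved ``refined analogue of Lemma \ref{sum of link of vertices in a facet}.'' That lemma and the surrounding double-counting argument are genuinely 3-dimensional: their proofs use that ridge links consist of exactly two vertices, that 2-dimensional Eulerian vertex links have $f$-vectors determined by $f_0$, and Lemma \ref{f number:3-manifold}; none of these has an analogue stated or obviously true for flag $2m$-spheres with $m\geq 2$. Moreover your own computation shows the gap is not a boundary case: the max-degree estimate peaks at $a^*=\frac{m(n+2)}{m+1}$ with value on the order of $\frac{m}{2(m+1)}n^2$, while $f_1(J_m^*(n))$ is on the order of $\frac{m-1}{2m}n^2$, so the overshoot is of order $n^2$; closing it contains Conjecture \ref{conj: even dim} itself, which the paper indicates may hinge on extending the Davis--Okun theorem and which certainly does not follow from the odd-dimensional conjectures you assume. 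For the equality part, the decisive step --- producing a facet $\tau\cup\tau'$ with $V(\lk_\Delta\tau)\cup V(\lk_\Delta\tau')=V(\Delta)$ so that Lemma \ref{equality case} applies --- is exactly what you concede you cannot do; in the paper's 3-dimensional argument this hypothesis is extracted from the equality cases of Lemmas \ref{sum of link of edges in a facet} and \ref{sum of link of vertices in a facet}, which again have no higher-dimensional counterparts here. Finally, reducing $f_i$-equality for $i\geq 2$ to the $i=1$ case is not automatic: unlike the situation of Lemma \ref{f number:3-manifold}, the $f$-vector of a $2m$-sphere with $m\geq 2$ is not determined by $f_0$ and $f_1$, so ``passing to links in the FFK framework'' needs an actual argument. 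In short, the proposal correctly locates the obstacles but resolves none of them; the statement remains a conjecture.
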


        \section*{Acknowledgements}
        I would like to thank Isabella Novik, Steven Klee and Eran Nevo for helpful comments.

		\bibliographystyle{amsplain}
		
	\end{document}